\newtheorem{theorem}{Theorem}
\newtheorem{corollary}[theorem]{Corollary}
\newtheorem{lemma}[theorem]{Lemma}
\newtheorem{proposition}[theorem]{Proposition}
\newtheorem{definition}[theorem]{Definition}
\newtheorem{remark}[theorem]{Remark}
\newtheorem{example}[theorem]{Example}
\def \kcharge {k\text{-charge}}
\def \kcocharge {k\text{-cocharge}}
\def\charge{ {\rm {charge}}}
\def\cocharge{ {\rm {cocharge}}}
\def\shape{ {\rm {shape}}}
\def \Tleqi{T_{\leq i}}
\def \kcharge {k\text{-charge}}
\def \kcocharge {k\text{-cocharge}}
\def \ktableau {k\text{-tableau}}
\def \ktableaux {k\text{-tableaux}}
\def \iup {i^{\uparrow}}
\def \idown {i^{\downarrow}}
\def \imdown {{(i-1)}^{\downarrow}}
\begin{document} 

%------------------------------------------------------------------------------------------------
%Title Section
%------------------------------------------------------------------------------------------------

\title{Positivity of affine charge}
\author{Avinash J. Dalal 
%  \and
%  Jennifer Morse
}

\newcommand{\Addresses}{{% additional braces for segregating \footnotesize
  \bigskip
  \footnotesize

  A.~Dalal, \textsc{Department of Mathematics \& Statistics, University of West Florida,
    Pensacola, FL 32514.}\par\nopagebreak
  \textit{E-mail:} \texttt{adalal@uwf.edu}

%  \medskip

%  J.~Morse, \textsc{Department of Mathematics, Drexel University, Philadelphia, PA 19143.}\par\nopagebreak \textit{E-mail:} \texttt{morsej@math.drexel.edu}
}}

\date{}
\maketitle

%------------------------------------------------------------------------------------------------
\begin{abstract} 
%------------------------------------------------------------------------------------------------

The branching of $k-1$-Schur functions into $k$-Schur functions was given by Lapointe, Lam, Morse and Shimozono as chains in a poset on $k$-shapes.  The $k$-Schur functions are the parameterless case of a more general family of symmetric functions over $\mathbb Q(t)$, conjectured to satisfy a $k$-branching formula given by weights on the $k$-shape poset.  A concept of a (co)charge on a $k$-tableau was defined by Lapointe and Pinto.  Although it is not manifestly positive, they prove it is compatible with the $k$-shape poset for standard $k$-tableau and the positivity follows.  Morse introduced a manifestly positive notion of affine (co)charge on $k$-tableaux and conjectured that it matches the statistic of Lapointe-Pinto.  Here we prove her conjecture and the positivity of $k$-(co)charge for semi-standard tableaux follows.

\end{abstract}

%------------------------------------------------------------------------------------------------
\section{Introduction}
%------------------------------------------------------------------------------------------------

The Macdonald basis for the space of symmetric functions is at the center of topics such as double affine Hecke algebras, quantum relativistic systems, diagonal harmonics and Hilbert schemes on points in the plane.  The study was initiated by the work of Macdonald when he conjectured \cite{[M2]} non-negativity of the $q,t$-polynomial coefficients in terms of a shifted basis of Schur functions.  Garsia rephrased his conjecture using a modification of Macdonald's polynomials, $H_\mu(x;q,t)$, as

\begin{equation}
\label{macpolyintro}
H_{\mu}(x;q,t) = \sum_{\lambda}K_{\lambda \mu}(q,t) s_{\lambda}(x)\,, \emph{ where } K_{\lambda \mu}(q,t) \in \mathbb{N}[q,t]\,.
\end{equation}
As such, the $q,t$-Kostka polynomials gained representation theoretic significance and a combinatorial formula has long been sought.

When $q=0$, $K_{\lambda \mu}(0,t)$ are the Kostka-Foulkes polynomials.  These polynomials appear in connection with Hall-Littlewood polynomials \cite{Green}, affine Kazhdan-Lusztig theory \cite{Lu}, affine tensor product multiplicities~\cite{NY:1997}, and they also encode the dimensions of bigraded $S_n$-modules \cite{GP}.  In \cite{LSfoulkes}, Lascoux and Sch\"utzenberger combinatorially characterized these polynomials by associating a non-negative integer statistic called $\charge$ to each semi-standard Young tableau and proved that

\begin{equation}
\label{chargeintro}
K_{\lambda\mu}(0,t) = \sum_{T\in SSYT(\lambda,\mu)} t^{\charge(T)}\,,
\end{equation}
where $SSYT(\lambda, \mu)$ is the set of semi-standard Young tableaux of shape $\lambda$ and weight $\mu$.  

While studying the Macdonald polynomials, a new family of symmetric functions, $s_{\mu}^{(k)}(x;t)$, called $k$-atoms was introduced in~\cite{[LLM]}.  Empirical evidence suggested that, for any positive integer $k$, these functions are indexed by a partition $\mu$ whose parts are not larger than $k$ and 
$$
s_{\mu}^{(k)}(x;t) = \sum_{T \in \mathcal{A}_{\mu}^{(k)}} t^{charge(T)} s_{shape(T)}(x)\,,
$$
where $\mathcal{A}_{\mu}^{(k)}$ is a certain set of tableaux of weight $\mu$.  One striking observation they made was that for any partition $\lambda$ whose parts are not larger than $k$,
$$
H_{\lambda}(x;q,t) = \sum_{\stackrel{ \mu}{\mu_1 \leq k}} K_{\mu \lambda}^{(k)}(q,t) s_{\mu}^{(k)}(x;t)\,, \emph{ where }K_{\mu \lambda}^{(k)}(q,t) \in \mathbb{N}[q,t]\,.
$$
Another observed feature of $k$-atoms is that for large values of $k$, $s_{\mu}^{(k)}(x;t) = s_{\mu}(x;t)$, and thus $K_{\mu \lambda}^{(k)}(x;t)$ reduces to the $q,t$-Kostka polynomials in \eqref{macpolyintro}.  This observation inspired the conjecture in \cite{[LLM]} that the $k$-atoms expand positively in terms of $(k+1)$-atoms:
\begin{equation}
\label{kbranchconj}
s_{\lambda}^{(k)}(x;t) = \sum_{\mu}b_{\lambda \mu}^{(k \rightarrow k+1)}(t)s_{\mu}(x;t)\,,
\end{equation}
where the $k$-branching coefficients, $b_{\lambda \mu}^{(k \rightarrow k+1)}(t) \in \mathbb{N}[t]$.  In~\cite{[LLMS2]}, a poset on partitions called $k$-shapes was introduced and it was conjectured that the $k$-branching coefficients enumerate maximal chains in this poset modulo an equivalence.  The result was proven therein for $t=1$.

A massive effort towards the generic $t$ case was put forth by Lapointe and Pinto~\cite{MR3115329}.  A key focus in their work is the introduction of a statistic on the set of objects ($\ktableaux$) whose enumeration is $K_{\lambda\mu}^{(k)}(1,1)$.  The statistic called $\kcharge$ is conjectured to give, for partition $\lambda$ with $\lambda_1\leq k$,
\begin{equation}
\label{HLitokSchur}
H_{\lambda}(x;t) = \sum_{T}t^{\kcharge(T)}s_{\shape(T)}^{(k)}(x;t)\,,
\end{equation}
where $T$ is a $\ktableau$ of weight $\lambda$.  As evidence to support their definition of $k$-charge, \cite{MR3115329} prove that the $k$-charge for a standard $k$-tableau is compatible with the weak bijection introduced in \cite{[LLMS2]}.

\begin{theorem}
\cite{MR3115329}
\label{LPtheorem}
The weak bijection in the standard case
$$
\text{SWTab}_{\lambda}^k \longrightarrow \bigsqcup_{\mu \in \mathcal{C}^k} \text{SWTab}_{\mu}^{k-1} \times \overline{\mathcal{P}}^k(\lambda,\mu)
$$
is given by $T^{(k)} \longmapsto (T^{(k-1)},[\bf{p}])$, where $T^{(k)}$ is a $\ktableau$, $\text{SWTab}_{\lambda}^k$ is the set of all standard $\ktableau$ of shape $\lambda$ and $[\bf{p}]$ is a certain equivalence class of paths in the $k$-shapes poset, is such that
$$
\kcharge(T^{(k)}) = k'\text{-charge}(T^{(k-1)}) + charge([\bf{p}])\,,
$$
where $k' = k-1$ and $charge([\bf{p}])$ is the charge of the path $\bf{p}$.
\end{theorem}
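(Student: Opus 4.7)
The plan is to argue by induction on $n = |\lambda|$, exploiting the iterative construction of the weak bijection from \cite{[LLMS2]}. The first step is to verify that the weak bijection is naturally letter-by-letter: deleting the largest entry $n$ from $T^{(k)}$ produces a standard $k$-tableau $T^{(k)}_{<n}$, and under the bijection this should correspond to deleting the largest entry $m$ from $T^{(k-1)}$ and truncating $\mathbf{p}$ to a prefix $\mathbf{p}'$ of length one less. The base case $n=0$ is vacuous, since all three statistics vanish on the empty tableau.

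The inductive step then reduces to proving the one-letter identity
\[
\kcharge(T^{(k)}) - \kcharge(T^{(k)}_{<n}) = \bigl((k-1)\text{-charge}(T^{(k-1)}) - (k-1)\text{-charge}(T^{(k-1)}_{<m})\bigr) + \bigl(\charge([\mathbf{p}]) - \charge([\mathbf{p}'])\bigr).
\]
The left-hand side is the cost, in the Lapointe-Pinto statistic, of adding the cell labeled $n$ on top of $T^{(k)}_{<n}$; this cost is governed by a residue count (mod $k+1$) determined by the local $k$-shape near the new cell. The first parenthesized term on the right is the analogous residue count at level $k-1$ for the cell labeled $m$, and the second is the label on the single new covering relation added to $\mathbf{p}$ at the top.

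The hard part will be establishing this one-letter identity. The Lapointe-Pinto $\kcharge$ is not manifestly additive: its contributions depend globally on the $k$-shape of $\lambda$, so isolating the incremental effect of a single cell requires careful local analysis. My strategy would be a case split according to the type of covering relation at the top of $\mathbf{p}$, which in the $k$-shapes poset of \cite{[LLMS2]} comes in a short list, and, in each case, a direct comparison between the change in $\kcharge$ at level $k$ near the cell labeled $n$ and the change in $(k-1)$-charge at level $k-1$ near the cell labeled $m$. In each case the discrepancy should match precisely the label on the new covering step, yielding the identity. Summing this identity over $n, n-1, \ldots, 1$ then delivers the theorem.
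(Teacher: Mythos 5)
This theorem is not proved in the paper at all: it is quoted verbatim from Lapointe--Pinto \cite{MR3115329} as background, so there is no internal proof to compare your attempt against. Judged on its own terms, your proposal is an outline rather than a proof, and the two steps on which it rests are precisely where all of the content lies. First, the claim that the weak bijection of \cite{[LLMS2]} is ``letter-by-letter'' --- that deleting the largest entry $n$ from $T^{(k)}$ corresponds to deleting the largest entry from $T^{(k-1)}$ and truncating $\mathbf{p}$ by one covering step --- is a genuine structural assertion about that bijection, which is constructed from moves in the $k$-shape poset rather than from letter deletions; you signal it with ``should correspond'' but give no argument, and without it the induction does not even get off the ground.

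Second, the one-letter identity you reduce to is exactly the theorem in miniature, and you explicitly defer it (``the hard part will be establishing this one-letter identity,'' ``in each case the discrepancy should match''). It is true that the Lapointe--Pinto statistic is defined recursively, so the increment $\kcharge(T^{(k)})-\kcharge(T^{(k)}_{<n})$ is the single index $I_n$, which depends on $diag(n^{\uparrow},(n-1)^{\uparrow})$ and the relative positions of those cells; but relating that diagonal count to the charge label of the new covering relation in the $k$-shape poset, case by case over the types of covers, is the entire substance of the compatibility result, and none of those cases is carried out or even enumerated. As written, the proposal is a plausible plan of attack, not a proof.
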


A consequence of the weak bijection is that the charge of a standard tableau on $n$ letters is the sum of the charge on the corresponding paths in the $k$-shapes poset, for $k=2,3,\ldots,n$.  Namely, iterating the weak bijection starting with a standard tableau $T$ on $n$ letters gives
$$
T \longmapsto (T^{(n-1)},[{\bf{p}}_n]), \hspace{0.2in} T^{(n-1)} \longmapsto (T^{(n-2)},[{\bf{p}}_{n-1}]), \hspace{0.1in} \ldots, \hspace{0.1in} T^{(2)} \longmapsto (T^{(1)},[{\bf{p}}_2])\,,
$$
which puts $T$ in correspondence with $(T^{(1)},[{\bf{p}}_n],[{\bf{p}}_{n-1}],\ldots,[{\bf{p}}_2])$.  Since there is a unique $1$-tableau $T^{(1)}$, then $T$ is in correspondence with the equivalence of paths $([{\bf{p}}_n],[{\bf{p}}_{n-1}],\ldots,[{\bf{p}}_2])$.  Finally, the $\kcharge$ of $T^{(1)}$ being $0$, and the compatibility between the $\kcharge$ and the weak bijection in the standard case implies
$$
charge(T) = charge([{\bf{p}}_n]) + charge([{\bf{p}}_{n-1}])+\cdots+charge([{\bf{p}}_2])\,.
$$

Lapointe and Pinto naturally define a complementary $k$-cocharge statistic as well.  Although their statistics are not obviously non-negative, in the case of standard $k$-tableaux, the non-negativity follows from compatibility with the weak bijection~\cite{MR3115329}.  The non-negativity of $k$-(co)charge for any semi-standard $k$-tableaux was unresolved.

In private communication with Morse \cite{DMComm} (see also~\cite{kschurbook}), she defined manifestly non-negative statistics on $k$-tableaux and conjectured them to be the $k$-(co)charge.  Here we prove her conjecture and as a consequence prove that the $k$-(co)charge of any semi-standard $\ktableau$ is non-negative.  

%------------------------------------------------------------------------------------------------
\section{Related Work}
%------------------------------------------------------------------------------------------------

Since the inception of $k$-atoms in~\cite{[LLM]}, many articles concerning $s_\lambda^{(k)}(x;t)$ have appeared.  While most consider only the parameterless case when $t=1$, in addition to the work of Lapointe-Pinto just discussed, there have been a number of other achievements in full generality.  In~\cite{[B1]}, Blasiak conjectures that $k$-atoms can be characterized by catabolizability conditions and connects them to representation theory.  In \cite{[MSaffinecharge]}, $K_{\lambda\mu}^{(k)}(0,t)$ is shown to be an expression over solvable lattice models by Nakayashiki and Yamada.  The intense study~\cite{AB} of covers in the Bruhat order on the affine symmetric group sheds light on the conjectured characterization for $k$-atoms~\cite{[LLMS]} as generating functions for marked saturated chains.  The work of \cite{DM3} introduces a notion of affine charge on affine Bruhat counter-tableaux, objects in bijection with $\ktableaux$, and definitively proves that Macdonald polynomials and quantum and affine Schubert calculus are interconnected. 

%------------------------------------------------------------------------------------------------
\section{Prelimiaries}
%------------------------------------------------------------------------------------------------

%Define Partitions, $(1^m)$.

%Define $n(\mu)$.

%Define $(k+1)$-cores, k-bounded partitions.  The $(k+1)$-resiude (or just the residue if $k$ is given) of a cell $c = (i,j)$ is equal to $(j-i) \mod k+1$.

%define standard k-tab and semi standard k-tab.  $\iup$, $\idown$.  For a standard $\ktableau$, all the cells filled with the same letter $i$ will have the same residue, and we write $res(i)$ to denote its residue.  Furthermore, for a standard $\ktableau$, the sub-tableau obtained by deleting all the cells with letters larger than $i$ of $T$ will be denoted as $T_{\leq i}$.
%\\
%\hrule 
%\vspace{0.2in}

A {\it{composition}} $\alpha = (\alpha_1, \alpha_2, \ldots, \alpha_m)$ is a vector of positive integers.  A {\it{partition}} $\lambda = (\lambda_1, \lambda_2, \ldots,\lambda_m)$ is a composition such that $\lambda_1 \geq \lambda_2 \geq \cdots \geq \lambda_m$.  The length of $\lambda$, denoted $\ell(\lambda)$, is the number of parts in $\lambda$, and the sum of the parts of $\lambda$ is denoted $|\lambda|$.  A partition of length $m$ whose parts are all $1$ will be denoted $(1^m)$.  A function on a partition $\lambda$ that we will use is
$$
n(\lambda) = \sum_{i=1}^{\ell(\lambda)} (i-1)\lambda_i\,.
$$

Every partition $\lambda$ has a corresponding {\it{Ferrer's diagram}}, which has $\lambda_i$ lattice cells in the $i^{th}$ row for $1 \leq i \leq \ell(\lambda)$.

\begin{example}
\label{partitionexample}
The partition $\lambda = (5,3,2,2,1,1)$ has the corresponding Ferrer's diagram
$$
{\text{\tiny{\tableau[sbY]{ \cr \cr & \cr & \cr & & \cr & & & & }}}}\,.
$$
\end{example}

If $\lambda \subseteq \mu$, then the skew shape $\mu/\lambda$ are those cells of $\mu$ which are not in $\lambda$.  Any cell $c$ of a Ferrer's diagram located in the $i^{th}$ row from the bottom and $j^{th}$ column from the left can be written as $c = (i,j)$.  For a given cell $c = (i,j)$ of a partition $\lambda$, the {\it{hook-length}} of $c$, $h_{\lambda}(c)$, is the number of cells in the $i^{th}$ row to the right of $c$ plus the number of cells in the $j^{th}$ column above $c$ plus 1 to include $c$.  

It is certain subsets of the set of partitions that are central in our study.  For a positive integer $n > 1$, an {\it{$n$-core}} is a partition whose shape has no cells of hook-length $n$.  Given any cell $c = (i,j)$ of an $n$-core, the {\it{$n$-residue}} of $c$, or the $res(c)$, is $(j-i) \mod n$.  This tells us that the $n$-core $\lambda$ has cells whose residues are periodically labelled with $0, 1, \ldots, n-1$, where zeros are the residues of the cells on the main diagonal.

\begin{example}
\label{coreexample}
An example of a $5$-core whose cells are labelled with $5$-residues is
$$
{\text{\tiny{\tableau[sbY]{ 1 \cr 2 \cr 3 & 4 \cr 4 & 0 & 1 \cr 0 & 1 & 2 & 3 & 4 & 0 & 1}}}}\,.
$$
\end{example}

A crucial proposition on cores that we will use comes from the work of L. Lapointe, A. Lascoux and J. Morse \cite{[LLM], [LMcore]}.  We say that a cell $(i,j)$ of a partition $\lambda$ is called an {\it{extremal cell}} if $(i+1,j+1) \not\in \lambda$.

\begin{proposition}
\label{lmcoreprop}
\cite{[LMcore]}
Let $\lambda$ be a $n$-core, where $c$ and $c'$ are extremal cells of $\lambda$ with the same $n$-residue.  
\begin{enumerate}
\item If $c'$ is weakly north-west of $c$ and $c$ is at the end of its row, then $c'$ is at the end of its row.  
\item If $c'$ is weakly south-east of $c$ and $c$ is at the top of its column, then $c'$ is at the top of its column. 
\end{enumerate}
\end{proposition}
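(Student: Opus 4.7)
The plan is to prove part (1) by a direct hook-length argument and to deduce part (2) from (1) by conjugation. Transposing $\lambda \mapsto \lambda'$ preserves the $n$-core property (hook lengths are invariant under conjugation), carries extremal cells to extremal cells, swaps ``end of row'' with ``top of column'' and ``weakly NW'' with ``weakly SE'', and negates $n$-residues modulo $n$; so statement (2) for $\lambda$ is exactly statement (1) for $\lambda'$.

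For (1), write $c = (i,j)$ with $\lambda_i = j$, and $c' = (i',j')$ extremal with $i' \geq i$, $j' \leq j$, and $j - i \equiv j' - i' \pmod{n}$. Set $a = j-j'$ and $b = i'-i$; then $a,b \geq 0$ and $a+b$ is a non-negative multiple of $n$. If $a = b = 0$ then $c = c'$ and there is nothing to prove, so assume $a+b \geq n$. Assume for contradiction that $c'$ is not at the end of its row, i.e., $\lambda_{i'} \geq j'+1$; together with $\lambda_{i'} \leq \lambda_i = j$ this already forces $a \geq 1$.

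The key step is to examine the cell $f = (i, j'+1)$, which lies in $\lambda$ because $j'+1 \leq j = \lambda_i$. I would show $\lambda'_{j'+1} = i'$ by bracketing: the lower bound $\lambda'_{j'+1} \geq i'$ follows from $(i', j'+1) \in \lambda$, and the upper bound $\lambda'_{j'+1} \leq i'$ follows from the extremality of $c'$, which gives $\lambda_{i'+1} \leq j'$. The hook length of $f$ then equals $(\lambda_i - j' - 1) + (\lambda'_{j'+1} - i) + 1 = (a-1) + b + 1 = a+b$, a positive multiple of $n$.

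The contradiction now comes from the classical theorem that no hook length of an $n$-core is divisible by $n$ (Nakayama), so $c'$ must be at the end of its row. The delicate point I expect to be the main obstacle is the identity $\lambda'_{j'+1} = i'$, which couples the hypothesis ``$c'$ not at end of row'' with ``$c'$ extremal'' to pin the column height down exactly; everything else is bookkeeping. If one prefers to avoid invoking Nakayama and work only with the paper's narrower definition of $n$-core (no hook of length exactly $n$), the same contradiction can be obtained by induction on $\ell = (a+b)/n$: the base case $\ell = 1$ already gives a hook of length exactly $n$, and for $\ell \geq 2$ the unique extremal cell on the diagonal $(j-i)-n$ sits weakly between $c$ and $c'$ (using $\lambda_{i+n+1} \leq \lambda_i = j$ for one bound and the existence of $(i'-(\ell-1)n, j') \in \lambda$ for the other) and splits the problem into two instances of strictly smaller $\ell$, to which the inductive hypothesis applies.
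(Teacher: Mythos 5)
The paper does not prove this proposition: it is quoted verbatim from Lapointe--Morse \cite{[LMcore]} as an external input, so there is no internal proof to compare yours against. On its own terms, your argument is correct and complete. The reduction of (2) to (1) by conjugation is clean (conjugation preserves hook lengths, hence $n$-cores, carries extremal cells to extremal cells, and negates contents, so it preserves the same-residue condition). In (1), the bracketing $\lambda'_{j'+1}=i'$ is exactly right: the lower bound uses the assumed $\lambda_{i'}\geq j'+1$ and the upper bound uses extremality of $c'$, and the hook of $f=(i,j'+1)$ then computes to $a+b$, a positive multiple of $n$. The one point that genuinely needs the extra care you give it is the invocation of ``no hook divisible by $n$'': the paper's definition of an $n$-core only forbids hooks of length exactly $n$, so you must either cite the classical equivalence of the two conditions or run your induction on $\ell=(a+b)/n$; your sketch of the latter is sound, since the extremal cells ordered by decreasing content form a weakly NW-moving lattice path, so the extremal cell of content $(j-i)-n$ does lie weakly between $c$ and $c'$ and the two subproblems have smaller $\ell$. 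Either route closes the argument, and the edge cases ($c=c'$, and $a=0$ being impossible under the contradiction hypothesis) are handled.
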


One immediate consequence of Proposition \ref{lmcoreprop} is a simple remark that will be quite useful.  For any $n$-core $\lambda$, an {\it{addable corner}} of $\lambda$ is a cell $(i,j) \not\in \lambda$ with $(i,j-1),(i-1,j) \in \lambda$, and a {\it{removable corner}} of $\lambda$ is a cell $(i,j) \in \lambda$ with $(i,j+1),(i+1,j) \not\in \lambda$.

\begin{remark}
\label{lmcoreremark}
\cite{[LMcore]}
An $n$-core $\lambda$ never has both a removable corner and an addable corner of the same $n$-residue.
\end{remark}

For a positive integer $k$, a special filling of a $(k+1)$-core, called {\it$\ktableaux$}, was introduced in \cite{LMcore} to describe Pieri-type rules which the {\it $k$-Schur functions} satisfy at $t=1$.  Furthermore, at $t=1$, the {\it dual $k$-Schur functions} are the generating functions for these $\ktableaux$ of a given $(k+1)$-core shape.

\begin{definition}
\label{ktabdefinition}
For a positive integer $k$, let $\lambda$ be a $(k+1)$-core with $m$ $k$-bounded hooks and let $\alpha = (\alpha_1, \alpha_2, \ldots, \alpha_r)$ be a composition of $m$, where no $\alpha_i$ is larger than $k$.  A $k$-tableau of shape $\lambda$ and weight $\alpha$ is a tableau of shape $\lambda$ filled with integers $1,2,\ldots,r$ such that the collection of cells filled with letter $i$ are labeled by exactly $\alpha_i$ distinct $(k+1)$-residues.
\end{definition}

\begin{example}
\label{ktabexample0}
For $k=3$, a $\ktableau$ $T$ of shape $4$-core $(5,2,1)$ and weight $(2,2,2)$ is
$$
T = {\footnotesize{\tableau[scY]{ 3_2 \cr 2_3 & 3_0 \cr 1_0 & 1_1 & 2_2 & 2_3 & 3_0}}}
$$
We have labeled the $(k+1)$-residue's of the cells as the sub-scripts on the letters filling them.
\end{example}

%\begin{definition}
%\cite{kSchurBook}
%\label{ktabdefinition}
%For a positive integer $k$, let $\lambda$ be a $(k+1)$-core, whose corresponding $k$-bounded partition is $\mu$.  Let $\alpha = (\alpha_1, \alpha_2, \ldots, \alpha_m)$ be a partition of $|\mu|$ with no part larger than $k$.  A $\ktableau$ of weight $\alpha$ is a semi-standard filling of shape $\lambda$ with letters $1, 2, \ldots, m$, such that the collection of cells filled with letter $i$ are labeled by exactly $\alpha_i$ distinct $(k+1)$-residues.
%\end{definition}

\begin{example}
\label{ktabexample}
For $k=3$, the only two $\ktableau$ of weight $(3,2,1)$ are
$$
{\footnotesize{\tableau[scY]{2_3 & 2_0 & 3_1 \cr 1_0 & 1_1 & 1_2 & 2_3 & 2_0 & 3_1}}} \hspace{0.2in} \& \hspace{0.2in} {\footnotesize{\tableau[scY]{3_2 \cr 2_3 & 2_0 \cr 1_0 & 1_1 & 1_2 & 2_3 & 2_0}}}
$$
%We have labeled the residue's of the cells as the sub-scripts on the letters filling them.
\end{example}
%We begin with a focus on standard $\ktableaux$, those of weight $(1^m)$.  Once we have established our results for standard $\ktableaux$, we will generalize to $\ktableaux$ of any partition weight whose parts are bounded by $k$.
A key focus in our work will be on standard $\ktableaux$, those of weight $(1^m)$.

\begin{example}
\label{stdktabexample}
For $k=2$, all standard $\ktableau$ of weight $(1^4)$ are
$$
{\footnotesize{\tableau[sbY]{3_2 & 4_0 \cr 1_0 & 2_1 & 3_2 & 4_0}}} \hspace{0.2in} {\footnotesize{\tableau[sbY]{4_1 \cr 3_2 \cr 1_0 & 2_1 & 3_2}}} \hspace{0.2in} {\footnotesize{\tableau[sbY]{3_1 \cr 2_2 \cr 1_0 & 3_1 & 4_0}}} \hspace{0.2in} {\footnotesize{\tableau[sbY]{4_0 \cr 3_1 \cr 2_2 & 4_0 \cr 1_0 & 3_1}}}\,.
$$
%We have labeled the residue's of the cells as the sub-scripts on the letters filling them.
\end{example}

%{\bf{Take k large and write about (co)charge of semi-standard tableaux}}
One intrinsic property of a $\ktableau$ is in its shape when the parameter $k$ is taken sufficiently large.  A $\ktableau$ of shape $(k+1)$-core $\lambda$ has no cells of hook-length larger than $k$ if and only if $k > \lambda_1 + \ell(\lambda) - 2$.  For these large values of $k$, the $\ktableau$ is a semi-standard tableau.  As a result, when a $\ktableau$ has shape $(k+1)$-core $\lambda$ whose cells have hook-length less than $k+1$, the $\charge$ and a $\cocharge$ from \cite{LSfoulkes} can be computed on that $\ktableau$.  Our focus is on two statistics which apply to a $\ktableau$ for any $k>0$.

%We will focus on two refinements of the cocharge statistic for a $\ktableau$ for any $k$ value.  We will show that these two $\kcocharge$ formulations are equivalent for any standard $\ktableau$, and then we will generalize our results to semi-standard $\ktableaux$.  We will then end with similar results for the $\kcharge$ of a $\ktableau$.

%------------------------------------------------------------------------------------------------
\section{$\kcocharge$ of a standard $\ktableau$}
\label{stdkcchsection}
%------------------------------------------------------------------------------------------------

%From here on, we will always assume that $k$ is a positive integer, and we will label the $(k+1)$-residue's, or just residues, of the cells of a $\ktableau$ as the sub-scripts on the letters filling them.  The $\kcocharge$ statistic on $\ktableaux$ is first described for a standard $\ktableau$.  Upon choosing an appropriate set of {\it{standard sequences}}, we will be able to generalize the $\kcocharge$ to semi-standard $\ktableaux$.  Important to the definition is the number of diagonals of a specific residue between two cells.

From here on, we will always assume that $k$ is a positive integer, and we will label the $(k+1)$-residues, or just residues, of the cells of a $\ktableau$ as the sub-scripts on the letters filling them.  Furthermore, the weight of our $\ktableau$ will be a partition $\alpha$ whose parts are not larger than $k$.  The $\kcocharge$ statistic on $\ktableaux$ is first described for a standard $\ktableau$.  Important to the definition is the number of diagonals of a specific residue between two cells.

\begin{definition}
\label{diagdef}
Given two cells $c_1$ and $c_2$ of a $(k+1)$-core, let $diag(c_1,c_2)$ be the number of diagonals of reside $r$ that are strictly between $c_1$ and $c_2$ where $r$ is the residue of the lower cell.
\end{definition}

\begin{example}
\label{diagex}
For $k=4$,  a standard $k$-tableau of weight $(1^{9})$ is
%\vspace{-0.05in}
$$
T = {\text{\footnotesize{\tableau[scY]{8_2 \cr 5_3 & 7_4 \cr 4_4 & 6_0 \cr 1_0 & 2_1 & 3_2 & 5_3 & 7_4 & 9_0}}}} \hspace{0.1in} \Longrightarrow \hspace{0.1in} diag(4_4,3_2) = 0, \emph{ and  } diag(8_2,(1,5)) = 1.
$$
\end{example}

When it is well-defined to do so, functions defined with a cell as input can instead take a letter as input.  In particular, for standard $\ktableaux$ it is natural to discuss the residue of a specific letter (since any cell containing that letter has the same residue) instead of the residue of a specific cell.  %Furthermore, for a standard $\ktableau$ $T$ of weight $(1^m)$, the sub-tableau obtained by deleting all the cells with letters larger than $i$ of $T$ will be denoted $T_{\leq i}$, for $1 \leq i \leq m$.

\begin{definition}
\label{LPstdcch}
Given a standard $\ktableau$ $T$ of weight $(1^m)$, the lowest occurrence of $i$ will be denoted $\idown$, for $1 \leq i \leq m$.  Define the index vector $L(T) = [L_1, L_2, \ldots, L_m]$ recursively by setting $L_1 = 0$, and
$$
L_i = 
\begin{cases}
L_{i-1} + 1 + diag(\idown, (i-1)^{\downarrow}) & \text{if } (i-1)^{\downarrow} \text{ is strictly below } \idown \\
L_{i-1} - diag(\idown, (i-1)^{\downarrow}) & \text{otherwise}
\end{cases}
$$
for $2 \leq i \leq m$.  The $\kcocharge$ of $T$ is the sum of the entries of $L(T)$,
$$
\kcocharge(T) = \sum_{i=1}^m L_i\,.
$$
\end{definition}

\begin{example}
\label{LPcchex}
For the $\ktableau$ $T$ of Example \ref{diagex}, Table \ref{tab:stdcochargeextable} shows us that the $\kcocharge(T) = 13$.
\end{example}

%\begin{table}[h]
%\begin{tabular}{ c | c | c }
%$i$ & diag$(\idown, (i-1)^{\downarrow})$ & $L_i$ \\ [1ex]
%\hline
%1 & - & 0 \\
%\hline 
%$2$ & $0$ & $0 - 0 = 0$ \\
%\hline
%$3$ & $0$ & $0-0=0$ \\
%\hline
%$4$ & $0$ & $0+1+0 = 1$ \\
%\hline
%$5$ & $0$ & $1-0=1$ \\
%\hline
%$6$ & $0$ & $1+1+0=2$ \\
%\hline
%$7$ & $0$ & $2-0=2$ \\
%\hline
%$8$ & $1$ & $2+1+1=4$ \\
%\hline
%$9$ & $1$ & $4-1=3$ \\
%\hline
%$10$ & $0$ & $3-0 = 3$ \\
%\hline
%\end{tabular}
%\vspace{0.1in}
%\caption{$\kcocharge$ of $T$ from Example \ref{diagex}} 
%\label{tab:LPstdcochargeextable}
%\end{table}

%An observation of Definition \ref{LPstdcch} is that the $\kcocharge$ of a standard $\ktableau$ is not always positive, unlike the Lascoux-Schutzenberger cocharge on a tableau.  For this reason, it is sometimes useful to employ a different formulation of the $\kcocharge$.  To see this alternate form, we need to define a subtableau as well as a residue order for a given standard $\ktableau$.
%we first let $T_{\leq i}$ denote the subtableau obtained by deleting all the letters larger than $i$ of the standard $\ktableau$ $T$.  We also need to define a {\it{residue order}}.

In contrast to the cocharge from \cite{LSfoulkes}, Definition~\ref{LPstdcch} does not suggest that the $\kcocharge$ of a standard $\ktableau$ must be non-negative.  However, the compatability of $k$-cocharge with the $k$-shape poset for standard $k$-tableau $T$ implies that $\kcocharge(T)\geq 0$ in this case~\cite{MR3115329}.  Lapointe-Pinto also defined the statistic for semi-standard $k$-tableaux, but serious obstructions to extending the compatibility between $\kcocharge$ and the weak bijection in the general case left the non-negativity of $k$-cocharge unresolved.

In private communication, Morse provided a different, manifestly non-negative, statistic on $k$-tableaux and she conjectured it to be equivalent to Definition~\ref{LPstdcch}.  We recall her definition for standard $k$-tableaux and start by proving her conjecture in this case.  This requires a {\it residue order} on a $\ktableau$.

\begin{definition}
\label{ktabresorderdef}
Let $T$ be a standard $\ktableau$.  The low $T$-residue order of $\{0,1,\ldots,k\}$ is defined by
$$
x > x+1 > \cdots > k > 0 > 1 > \cdots > x-1\,,
$$
where $x$ is the residue of the lowest addable cell of $T$.  If the standard $\ktableau$ $T$ is of weight $(1^m)$, then $T_{\leq i}$ are those cells of $T$ filled with a letter $j \leq i$, for $1 \leq i \leq m$,
\end{definition}

\begin{example}
\label{ktabresorderex}
For the standard $\ktableau$ $T$ of Example \ref{diagex}, we see that
$$
T_{\leq 5} = {\text{\footnotesize{{\tableau[scY]{5_3 \cr 4_4 \cr 1_0 & 2_1 & 3_2 & 5_3}}}}}
$$
The low $T$-residue order is $2 > 3 > 4 > 0 > 1$, and the low $T_{\leq 5}$-residue order is $4 > 0 > 1 > 2 > 3$, which is also the low $T_{\leq 6}$-residue order.
\end{example}

%The alternate form of the $\kcocharge$ for standard $\ktableau$ involves an index vector.  This index vector is defined recursively using the residue order of Definition \ref{ktabresorderdef} for a standard $\ktableau$.

Morse's statistic~\cite{DMComm, kschurbook} for standard $\ktableau$ also involves an index vector, defined recursively using the residue order of Definition \ref{ktabresorderdef}.

\begin{definition}
\label{cochargeindexvector}
Given a standard $\ktableau$ $T$ of weight $(1^m)$, define the index vector $M(T) = [M_1,\ldots,M_m]$ recursively by setting $M_1 = 0$, and
$$
M_i = 
\begin{cases}
M_{i-1} + 1 & \text{if }res(i) > res(i-1) \\
M_{i-1} & \text{otherwise}
\end{cases}
$$
where $res(i)$ and $res(i-1)$ is compared using the low $T_{\leq i}$-residue order, for $2 \leq i \leq m$.
\end{definition}

%We are now ready to show that Definition \ref{cochargeindexvector} gives a positive reformulation of the $\kcocharge$.

\begin{theorem}
\label{cochargeLM}
For a standard $k$-tableau $T$ of weight $(1^m)$,
$$
\kcocharge(T) = \sum_{i = 1}^m \left( M_i + \text{diag}(\idown,c_{(i)}) \right)\,,
$$
where $c_{(i)}$ is the lowest addable cell of $T_{\leq i}$.
\end{theorem}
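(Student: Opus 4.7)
The plan is to prove by induction on $i$ the stronger per-index identity
\[
L_i \;=\; M_i + \text{diag}\bigl(\idown,\, c_{(i)}\bigr), \qquad 1 \leq i \leq m,
\]
where $L_i$ is the Lapointe--Pinto index from Definition~\ref{LPstdcch}. Summing this identity over $i$ recovers the claimed formula. The base case $i = 1$ is immediate: $L_1 = M_1 = 0$, $c_{(1)} = (1,2)$ is the unique row-$1$ addable corner of the single-cell tableau $T_{\leq 1}$, and no diagonal lies strictly between $1^{\downarrow} = (1,1)$ and $(1,2)$, so $\text{diag}(1^{\downarrow}, c_{(1)}) = 0$.

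The structural observation driving the inductive step is that $c_{(i)}$ is always the row-$1$ addable corner $(1,\lambda_1(T_{\leq i})+1)$; in particular its residue equals $\lambda_1(T_{\leq i}) \bmod (k+1)$. This produces a clean dichotomy. If $\text{res}(i)$ coincides with the residue of $c_{(i-1)}$, then letter $i$ fills $c_{(i-1)}$, so $\idown = c_{(i-1)}$ and $c_{(i)}$ moves one column to the right with residue cycled up by one. Otherwise letter $i$ is placed only in rows $\geq 2$ of $T_{\leq i-1}$, so $\idown$ has row $\geq 2$ and $c_{(i)} = c_{(i-1)}$. Crossing this dichotomy with the two cases of Definition~\ref{LPstdcch} (whether or not $\imdown$ is strictly below $\idown$) produces four sub-cases in which the inductive step is equivalent to showing
\[
L_i - L_{i-1} - (M_i - M_{i-1}) \;=\; \text{diag}(\idown, c_{(i)}) - \text{diag}(\imdown, c_{(i-1)}).
\]
I would verify each sub-case by counting, using Proposition~\ref{lmcoreprop} to pair up diagonals of a prescribed residue between two given cells and Remark~\ref{lmcoreremark} to rule out forbidden residue configurations at addable corners of the $(k+1)$-core.

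The main obstacle I anticipate is translating the cyclic comparison $\text{res}(i) > \text{res}(i-1)$ in the low $T_{\leq i}$-residue order---which is anchored at $\text{res}(c_{(i)})$---into the geometric condition ``$\imdown$ strictly below $\idown$'' modulo an explicit diagonal correction. The technical heart of the argument should be a counting lemma expressing $\text{diag}(p, c_{(j)})$ as an explicit function of $\text{res}(p)$, $\text{res}(c_{(j)})$, and the row-difference between $p$ and $c_{(j)}$. Such a lemma should reveal that the cyclic inequality in Definition~\ref{cochargeindexvector} holds exactly when the diagonal-count difference absorbs the ``$+1$'' appearing in the first branch of Definition~\ref{LPstdcch}; given this, each of the four sub-cases reduces to a routine arithmetic check.
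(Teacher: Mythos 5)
Your strategy coincides with the paper's: induct on $i$ via the strengthened per-index identity $L_i = M_i + diag(\idown, c_{(i)})$, and split the inductive step into cases governed by the relative positions of $\idown$ and $\imdown$. Your structural observations are sound --- in particular that $c_{(i)}$ is the addable corner at the end of the bottom row, giving the dichotomy that letter $i$ occupies $c_{(i-1)}$ precisely when $res(i)=res(c_{(i-1)})$, and otherwise $c_{(i)}=c_{(i-1)}$ with $\idown$ in row at least $2$. The base case is handled correctly.

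The gap is in the inductive step itself. You correctly reduce each sub-case to
$$
L_i - L_{i-1} - (M_i - M_{i-1}) \;=\; diag(\idown, c_{(i)}) - diag(\imdown, c_{(i-1)})\,,
$$
but then defer the verification to a counting lemma expressing $diag(p,c_{(j)})$ as a function of $res(p)$, $res(c_{(j)})$ and the row difference. No such lemma can hold: by Definition \ref{diagdef}, $diag(p,c_{(j)})$ is determined by the contents of the two cells (it counts the integers congruent to the content of the lower cell modulo $k+1$ lying strictly between the two contents), and the content gap between $p$ and the bottom-row addable corner involves the actual column of $p$, i.e.\ the row lengths of the core, which are not functions of residues and row indices alone. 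The reconciliation of the two branch conditions --- ``$\imdown$ strictly below $\idown$'' versus ``$res(i)>res(i-1)$ in the low $\Tleqi$-residue order'' --- instead comes from the core geometry of Proposition \ref{lmcoreprop}: one locates the lowest extremal cell $c$ of the bottom-row residue that is not in the bottom row, splits $diag(\imdown, c_{(i)}) = diag(\imdown, c) + 1 + diag(c, c_{(i)})$, and identifies $diag(c,c_{(i)})$ with $diag(\idown,c_{(i)})$ and $diag(\imdown,c)$ with $diag(\idown,\imdown)$ or $diag(\idown,\imdown)-1$ according to whether $\idown$ and $\imdown$ lie weakly north-west or south-east of $c$. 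That identification is exactly where the ``$+1$'' of Definition \ref{LPstdcch} is or is not absorbed, and it is the step your proposal leaves undone; without it the four sub-cases are not routine arithmetic checks.
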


\begin{proof}
The proof is by induction on the weight of $T$.  The base case of $i=1$ is trivial as we get $L_1 = M_1$.  Next suppose that $L_{i-1} = M_{i-1} + diag({(i-1)}^{\downarrow}, c_{i-1})$ and the last cell in the bottom row of $T_{\leq i}$ is of residue $r$.  The inductive step is proved by considering cases.  Suppose there is a lowest cell $c$ of residue $r$ that is not in the bottom row of $T_{\leq i}$, and $\imdown$ is weakly north-west of $c$ while $\idown$ is weakly south-east of $c$, and $\idown$ is not in the bottom row of $T_{\leq i}$.

On the one hand if $res(i) > res(i-1)$, then $M_i = M_{i-1} + 1 + diag(\idown, c_{(i)})$.  If $\idown$ is not in the bottom row of $T_{\leq i}$, then $c_{(i-1)} = c_{(i)}$.  This says
\begin{align*}
L_i &= L_{i-1} - diag(\idown, \imdown) \\
      &= M_{i-1} + diag(\imdown, c_{(i-1)}) - diag(\idown, \imdown) \\
      &= M_{i-1} + diag(\imdown, c_{(i)}) - diag(\idown, \imdown).
\end{align*}
Now if $c$ is the lowest extremal cell of residue $r$ that is not in the bottom row, then $diag(\imdown, c_{(i)}) = diag(\imdown, c) + 1 + diag(c, c_{(i)})$.  Since the $res(i) > res(i-1)$ and $\idown$ is weakly south-east of $c$, then $diag(c, c_{(i)}) = diag(\idown, c_{(i)})$.  Thus, $diag(\imdown, c_{(i)}) = diag(\imdown,c) + 1 + diag(\idown,c_{(i)})$.  This gives
\begin{align*}
L_i &= M_{i-1} + diag(\imdown, c_{(i)}) - diag(\idown, \imdown) \\
      &= M_{i-1} + 1 + diag(\idown,c_{(i)}) + diag(\imdown, c) - diag(\idown, \imdown).
\end{align*}
If $\imdown$ is weakly north-west of $c$ and $\idown$ is weakly south-east of $c$, then $diag(\imdown,c) = diag(\imdown, \idown)$.  This along with the fact that $c_{(i-1)} = c_{(i)}$, gives $L_i = M_i$.

On the other hand if the $res(i) < res(i-1)$, then $M_i = M_{i-1} + diag(\idown, c_{(i)})$.  The proof for this case follows almost similar to the proof for the case above, except this time $diag(\imdown, c) + 1 = diag(\idown,\imdown)$ because $res(i) < res(i-1)$.

The other cases, which are left to the reader to prove, are
\begin{enumerate}
\item $\idown$ is weakly south-east of ${(i-1)}^{\downarrow}$ in $T_{\leq i}$.  
\begin{enumerate}

\item ${(i-1)}^{\downarrow}$ is weakly south-east of any extremal cell of residue $r$ that is not in the bottom row of $\Tleqi$, and $\idown$ is in the bottom row of $\Tleqi$.%and $\idown$ is weakly south-east of $\imdown$, then $res(i-1) > res(i)$ under the $T_{\leq i}$-residue order.  This gives us 

\item There is a lowest cell $c$ of residue $r$ that is not in the bottom row of $T_{\leq i}$, and both $\imdown$ and $\idown$ are weakly north-west of $c$.% then the proof follows similar to the above case 2-(a).
\end{enumerate}
\item $\imdown$ is south-east of $\idown$, along with the sub-cases as above.% then the proof is split into similar cases as the above cases.
\end{enumerate}
\end{proof}

\begin{example}
\label{stdcochargeex}
For the $\ktableau$ $T$ of weight $(1^{9})$ from Example \ref{diagex}, Table \ref{tab:stdcochargeextable} shows us that 
$$
\sum_{i=1}^{9} \left( M_i + diag(\idown, c_{(i)}) \right) = 13\,.
$$
This agrees with Theorem \ref{cochargeLM} since Example \ref{LPcchex} told us that the $\kcocharge(T) = 13$.
%For $k = 4$, recall that the standard $k$-tableau of weight $(1^{10})$ from Example \ref{diagex} is
%$$
%T = \tableau[scY]{10_1 \cr 8_2 \cr 5_3 & 7_4 \cr 4_4 & 6_0 & 10_1 \cr 1_0 & 2_1 & 3_2 & 5_3 & 7_4 & 9_0 & 10_1}
%$$
%Table \ref{tab:stdcochargeextable} shows us that the $\kcocharge(T) = 16$, which agrees with Example \ref{LPcchex}.
\end{example}

\begin{table}[h]
\renewcommand{\arraystretch}{1.4}
\begin{center}
\footnotesize{
\begin{tabular}{| c | c | c | c | c | c |}
\hline 
$i$ & diag$(\idown, (i-1)^{\downarrow})$ & $L_i$ & Low $T_{\leq i}$-residue order & $M_i$ & diag$(i^{\downarrow},c_{(i)})$ \\ 
\hline
1 & - & 0 & - & 0 & 0 \\
\hline 
$2$ & $0$ & $0 - 0 = 0$ & $2 > 3 > 4 > 0 > 1$ & $0$ & $0$ \\
\hline
$3$ & $0$ & $0 - 0 = 0$ & $3 > 4 > 0 > 1 > 2$ & $0$ & $0$ \\
\hline
$4$ & $0$ & $0 + 1 + 0 = 1$ & $3 > 4 > 0 > 1 > 2$ & $1$ & $0$ \\
\hline
$5$ & $0$ & $1 - 0 = 1$ & $4 > 0 > 1 > 2 > 3$ & $1$ & $0$ \\
\hline
$6$ & $0$ & $1 + 1 + 0 = 2$ & $4 > 0 > 1 > 2 > 3$ & $2$ & $0$ \\
\hline
$7$ & $0$ & $2 - 0 = 2$ & $0 > 1 > 2 > 3 > 4$ & $2$ & $0$ \\
\hline
$8$ & $1$ & $2 + 1 + 1 = 4$ & $0 > 1 > 2 > 3 > 4$ & $3$ & $1$ \\
\hline
$9$ & $1$ & $4 - 1 = 3$ & $1 > 2 > 3 > 4 > 0$ & $3$ & $0$ \\
\hline
%$10$ & $0$ & $3 - 0 = 3$ & $2 > 3 > 4 > 0 > 1$ & $3$ & $0$ \\
%\hline
\end{tabular}
}
\end{center}
%\vspace{0.1in}
\caption{$\kcocharge$ of $T$ from Example \ref{diagex}} 
\label{tab:stdcochargeextable}
\end{table}

%------------------------------------------------------------------------------------------------
\section{$\kcocharge$ of a semi-standard $\ktableau$}
%------------------------------------------------------------------------------------------------
The Lascoux and Sch\"utzenberger cocharge statistic on a semi-standard tableau is an extension of the cocharge statistic on a standard tableau.  To do this for $k$-cocharge on a $k$-tableau $T$, a method for making an appropriate choice of standard sequences on $T$ is required.

%Similarly, the $\kcocharge$ on a semi-standard $\ktableau$ will be an extension of the $\kcocharge$ statistic on a standard $\ktableau$.  The trick is to introduce a method for making an appropriate choice of standard sequences on the semi-standard $\ktableau$.

\begin{definition}
\label{stdseqchoicedef}
Let $T$ be a semi-standard $\ktableau$.  Construct each standard sequence iteratively by 
\begin{enumerate}
\item first choosing the right-most cell in $T$ filled with a $1$ which has not been chosen in a previous standard sequence.

\item Having chosen an $i-1$, of some residue $r$, the appropriate choice of $i$ will be determined by considering the residues of only those $i$'s which haven't been chosen in a previous standard sequence.  Label all the residues $0,1,\ldots,k$ on a circle clockwise.  The appropriate choice of $i$ is the one whose residue is closest to $r$ reading counter-clockwise from $r$ on the circle.
\end{enumerate}
\end{definition}

Observe that Definition \ref{stdseqchoicedef} is well defined since the collection of cells filled with the letter $i$ are labeled by exactly $\alpha_i$ distinct residues, where $\alpha$, the weight of the $\ktableau$, is a partition whose parts are not larger than $k$.  Definition \ref{stdseqchoicedef} also shows us a way to iteratively compute a set of standard sequences for a given semi-standard $\ktableau$.

\begin{example}
\label{stdseqchoiceex}
For $k=4$, a $\ktableau$ of weight $(2,2,2,2,2,2,1)$ is
$$
T = {\text{\footnotesize{\tableau[scY]{\tf 7_0 \cr \tf 6_1 \cr \tf 5_2 & 6_3 \cr \tf 3_3 & 4_4 & \tf 7_0 \cr \tf 2_4 & 3_0 & 5_1 & \tf 5_2 & 6_3 \cr 1_0 & \tf 1_1 & 2_2 & \tf 3_3 & 4_4 & \tf 4_0 & 5_1 & \tf 5_2 & 6_3}}}} 
\hspace{1in}
\begin{tikzpicture}[baseline = -0.1in]
\draw[very thick] (0,0) circle (1);
\node at (0,1.2) {$0$};
\node at (-1.151, 0.309) {$4$};
\node at (-0.709, -0.95) {$3$};
\node at (0.709,-0.95) {$2$};
\node at (1.151,0.309) {$1$};
\end{tikzpicture}
%\vspace{0.2in}
$$
The bold cells of $T$ shows the first standard sequence of cells using Definition \ref{stdseqchoicedef}.  The set of residues labeling $5$ in $T$ is $\{1,2\}$, and the residue of $4$ in the first standard sequence is $0$.  Therefore, the choice of residue $2$ from $\{1,2\}$ is made because $2$ is closer to $0$ than $1$ when reading counter-clockwise on the above circle labeled with all the residues.  The cells which are not in bold form the second standard sequence of cells.
\end{example}

%For a semi-standard $\ktableau$, we must be careful about the residue order of Definition \ref{ktabresorderdef}.  We can first think that the letters $i$ that appear in a semi-standard $\ktableau$ are ordered with respect to the standard subsequences they belong to under Definition \ref{stdseqchoicedef}.  Specifically, the $i$ in the first standard sequence is larger than the $i$ from the second standard sequence, etc.  

The $\kcocharge$ definition for a standard $\ktableau$ extends to a semi-standard $\ktableau$ $T$ by summing over the standard sequences of $T$.  

\begin{definition}
\label{LPsstdcch}
Let $T$ be a semi-standard $\ktableau$.  For each standard sequence $s$ of $T$, define the index vector $L^{(s)}(T) = [L_1^{(s)},\ldots, L_{\ell(s)}^{(s)}]$ recursively by setting $L_1^{(s)} = 0$, and
$$
L_{i}^{(s)} = 
\begin{cases}
L_{i-1}^{(s)} + 1 + diag(\idown, (i-1)^{\downarrow}) & \text{if } (i-1)^{\downarrow} \text{ is strictly below } \idown \text{ in } s \\
L_{i-1}^{(s)} - diag(\idown, (i-1)^{\downarrow}) & \text{otherwise}
\end{cases}
$$
for $2 \leq i \leq \ell(s)$, where $\ell(s)$ is the length of $s$.  If $S$ is the set of all standard sequences, then the $\kcocharge$ of $T$ is 
$$
\kcocharge(T) = \sum_{s \in S} \, \sum_{i=1}^{\ell(s)} L_i \,.
$$
\end{definition}

\begin{example}
\label{LPsscchex}
For the semi-standard $\ktableau$ $T$ of Example \ref{stdseqchoiceex}, Definition \ref{LPsstdcch} says that the $\kcocharge(T) = 15$.
\end{example}

%\begin{table}[h]
%\centering
%\begin{tabular}{ c | c | c }
%$i_r$ & diag$(i^{\downarrow}, {(i-1)}^{\downarrow})$ & $L_i$ \\ [1ex]
%\hline 
%$1_1$ & $\text{-}$ & $0$ \\
%\hline
%$2_4$ & $0$ & $0 + 1 + 0 = 1$  \\
%\hline
%$3_3$ & $0$ & $1 - 0 = 1$ \\
%\hline
%$4_0$ & $0$ & $1 - 0 = 1$ \\
%\hline
%$5_2$ & $0$ & $1 - 0 = 1$ \\
%\hline
%$6_1$ & $2$ & $1 + 1 + 2 = 4$ \\
%\hline
%$7_0$ & $0$ & $4 - 0 = 4$ \\
%\hline\hline
%$1_0$ & $\text{-}$ & $0$ \\
%\hline
%$2_2$ & $0$ & $0 - 0 = 0$ \\
%\hline
%$3_0$ & $0$ & $0 + 1 + 0 = 0$ \\
%\hline
%$4_4$ & $0$ & $1 - 0 = 1$ \\
%\hline
%$5_1$ & $0$ & $1 - 0 = 1$ \\
%\hline
%$6_3$ & $0$ & $1 - 0 = 1$ \\
%\hline
%\end{tabular}
%\vspace{0.1in}
%\caption{$\kcocharge$ of $T$ from Example \ref{stdseqchoiceex}} 
%\label{tab:LPsscochargeextable}
%\end{table}

%Definition \ref{LPsstdcch} shows us that the $\kcocharge$ of a semi-standard $\ktableau$ is not always non-negative.  We extend Definitions \ref{ktabresorderdef} and \ref{cochargeindexvector} to a semi-standard $\ktableau$ to formulate a non-negative $\kcocharge$.

Definition~\ref{LPsstdcch} does not make the non-negativity of $k$-cocharge apparent.  Again, we instead match the definition with Morse's semi-standard statistic 
defined as follows.

\begin{definition}
\label{ssktabresorder}
For a semi-standard $\ktableau$ $T$ which has cells of residue $r$ and filled with $i$, define $T_{\leq i_r}$ as those cells of $T$ filled with $j \leq i$, where $j$ is in the same standard sequence as $i$.  The low $T_{\leq i_r}$-residue order of $\{0,1,\ldots,k\}$ is defined by 
$$
x > x+1 > \cdots > k > 0 > 1 > \cdots > x-1
$$
where $x$ is the residue of the lowest addable cell of $T_{\leq i_r}$.
\end{definition}

\begin{example}
\label{semistandardresidueorder}
For $k =  4$, since the second standard sequence of the semi-standard $k$-tableau $T$ of Example \ref{stdseqchoiceex} are the bold cells of
$$
{\text{\footnotesize{\tableau[scY]{7_0 \cr 6_1 \cr 5_2 & \tf 6_3 \cr 3_3 & \tf 4_4 & 7_0 \cr 2_4 & \tf 3_0 & \tf 5_1 & 5_2 & \tf 6_3 \cr \tf 1_0 & 1_1 & \tf 2_2 & 3_3 & \tf 4_4 & 4_0 & \tf 5_1 & 5_2 & \tf 6_3}}}} \hspace{0.3in} \Longrightarrow \hspace{0.3in} T_{\leq 5_1} = {\text{\footnotesize{\tableau[scY]{\bl \cr \bl & \bl \cr \bl & 4_4 & \bl \cr \bl & 3_0 & 5_1 & \bl & \bl \cr 1_0 & \bl & 2_2 & \bl & 4_4 & \bl & 5_1 & \bl & \bl}}}}
$$
The lowest addable cell of $T_{\leq 5_1}$ has residue $2$, so the low $T_{\leq 5_1}$-residue order is $2 > 3 > 4 > 0 > 1$.  Observe that the $5$ of residue $2$ is in the first standard sequence of $T$.  Since the lowest addable cell of $T_{\leq 5_2}$ is of residue $3$, then the $T_{\leq 5_2}$-residue order is $3 > 4 > 0 > 1 > 2$.
\end{example}

The alternate form of the $\kcocharge$ for a semi-standard $\ktableau$ involves multiple index vector's.  Each index vector corresponds to a standard sequence of the semi-standard $\ktableau$, and it is defined recursively using the residue order of Definition \ref{ssktabresorder}.

\begin{definition}
\label{Jsstdcch}
Let $T$ be a semi-standard $k$-tableau.  For each standard sequence $s$ of $T$, define the index vector $M^{(s)}(T) = [M_1^{(s)}, \ldots, M_{\ell(s)}^{(s)}]$ recursively by setting $M_1^{(s)} = 0$, and
$$
M_{i}^{(s)} = 
\begin{cases}
M_{i-1}^{(s)} + 1 & \text{if } res(i) > res(i-1) \\
M_{i-1}^{(s)} & otherwise
\end{cases}
$$
where $res(i)$ and $res(i-1)$ are compared using the low $T_{\leq i_r}$-residue order for each $i \in s$.
\end{definition}

%\begin{definition}
%\label{semistdresorder}
%Given a semi-standard $\ktableau$ $T$, the cell $i_r > i_s$ of $T$ if $i_r$ belongs to a standard sequence that comes before the standard sequence containing $i_s$.  Furthermore, we set $T_{\leq i_r}$ to be the set of cells of $T$ filled with the letters $j < i$ and those cells filled with $i_s \leq i_r$.
%\end{definition}

%The ordering in Definition \ref{semistdresorder} is well defined since each $i$ has a distinct residue in a given semi-standard $\ktableau$.  We will also see that $T_{\leq i_r}$ of Definition \ref{semistdresorder} is a semi-standard $\ktableau$.

%Furthermore, given a semi-standard $k$-tableau $T$, we let $T_{\leq {i_r}}$ be the cells of $T$ filled with the letters $j < i$ and those cells filled with $i_s$ where $i_s \leq i_r$.  

%\begin{theorem}
%\label{semistdsubktab}
%For any cell filled with $i_r$ of a semi-standard $\ktableau$ $T$, $T_{\leq i_r}$ of Definition \ref{semistdresorder} is a semi-standard $\ktableau$.
%\end{theorem}

%\begin{proof}
%{\bf{DO PROOF}}
%\end{proof}

%Henceforth, the index vector $M$ of Definition \ref{cochargeindexvector} is computed with respect to the $T_{\leq i_r}$-residue order for a cell filled with $i_r$ of the semi-standard $\ktableau$ $T$.

For a given semi-standard $\ktableau$ $T$, summing over the standard sequences of $T$ gives us a corollary to Theorem \ref{cochargeLM}.

\begin{corollary}
\label{sscochargeLM}
Let $T$ be a semi-standard $\ktableau$.  If $S$ is the set of all standard sequences of $T$, then the non-negative
$$
\kcocharge(T) = \sum_{s \in S} \sum_{i = 1}^{\ell(s)} \left(M_i^{(s)} + diag(\idown, c_{(i)}) \right)\,,
$$
where $\idown$ is the lowest occurrence of $i$ in $s$ and $c_{(i)}$ is the lowest addable cell of $T_{\leq i_r}$.
\end{corollary}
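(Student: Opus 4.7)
The plan is to reduce Corollary~\ref{sscochargeLM} to Theorem~\ref{cochargeLM} by working one standard sequence at a time. By Definition~\ref{LPsstdcch}, the quantity $\kcocharge(T)$ is already defined as an outer sum over the set $S$ of standard sequences of $T$ of the inner sum $\sum_{i=1}^{\ell(s)} L_i^{(s)}$. Similarly, the right-hand side of the claimed identity splits as an outer sum over $S$ of $\sum_{i=1}^{\ell(s)}\!\bigl(M_i^{(s)} + \mathrm{diag}(i^{\downarrow},c_{(i)})\bigr)$. It therefore suffices to prove, for each fixed $s\in S$, the single identity
$$
\sum_{i=1}^{\ell(s)} L_i^{(s)} \;=\; \sum_{i=1}^{\ell(s)} \left( M_i^{(s)} + \mathrm{diag}(i^{\downarrow}, c_{(i)}) \right),
$$
after which Corollary~\ref{sscochargeLM} follows by summation.

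The key step is to verify that the recursion for $L_i^{(s)}$ in Definition~\ref{LPsstdcch} and the recursion for $M_i^{(s)}$ in Definition~\ref{Jsstdcch} are formally identical to the recursions for $L_i$ and $M_i$ in Definitions~\ref{LPstdcch} and \ref{cochargeindexvector} once the role of the ambient standard $k$-tableau $T_{\leq i}$ is replaced by $T_{\leq i_r}$. Indeed, by Definition~\ref{ssktabresorder} the set $T_{\leq i_r}$ only retains the cells of $T$ filled with letters $j\leq i$ from the same standard sequence $s$, and by Definition~\ref{stdseqchoicedef} each letter $j\in s$ contributes a well-defined residue to that sequence. Consequently, within $s$, the lowest occurrence $i^{\downarrow}$ of $i$ and the lowest addable cell $c_{(i)}$ of $T_{\leq i_r}$ are well defined, the diagonal counts $\mathrm{diag}(i^{\downarrow},(i-1)^{\downarrow})$ are computed inside the same $(k+1)$-core as in the standard case, and the low $T_{\leq i_r}$-residue order plays exactly the role of the low $T_{\leq i}$-residue order. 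Thus the inductive argument of Theorem~\ref{cochargeLM} applies verbatim to the sequence $s$, yielding the required per-sequence equality. Summing over $s\in S$ then gives the stated formula for $\kcocharge(T)$. Non-negativity is immediate from the resulting expression, since each $M_i^{(s)}\geq 0$ by its recursive definition and each $\mathrm{diag}(i^{\downarrow},c_{(i)})\geq 0$ as a cardinality.

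The main obstacle is the structural step just described: one must confirm that the counter-clockwise selection rule of Definition~\ref{stdseqchoicedef} produces sequences on which the standard-case identity truly holds, i.e.\ that successive pairs $(i-1)^{\downarrow},i^{\downarrow}$ in $s$ fit one of the two cases (strictly-below / otherwise) governing the $L_i^{(s)}$ recursion, and that the comparisons $res(i)\lessgtr res(i-1)$ in the low $T_{\leq i_r}$-residue order reproduce exactly the case analysis used to prove Theorem~\ref{cochargeLM}. Once this compatibility is established, the rest of the proof is formally the standard-case argument applied $|S|$ times.
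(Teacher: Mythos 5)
Your proposal is correct and takes essentially the same approach as the paper: the paper derives Corollary~\ref{sscochargeLM} exactly by applying the standard-case identity of Theorem~\ref{cochargeLM} to each standard sequence $s\in S$ (with $T_{\leq i}$ replaced by $T_{\leq i_r}$ and the corresponding residue order) and summing over $S$, offering no more justification than the sentence preceding the corollary. Your write-up is if anything slightly more careful, since you explicitly flag the compatibility of the per-sequence recursions as the point that must be checked.
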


\begin{example}
\label{resorderofstdseqex}
For the $\ktableau$ $T$ of Example \ref{stdseqchoiceex}, we have the two standard sequences by Definition \ref{stdseqchoicedef}
$$
{\text{\footnotesize{\tableau[scY]{\tf 7_0 \cr \tf 6_1 \cr \tf 5_2 & 6_3 \cr \tf 3_3 & 4_4 & \tf 7_0 \cr \tf 2_4 & 3_0 & 5_1 & \tf 5_2 & 6_3 \cr 1_0 & \tf 1_1 & 2_2 & \tf 3_3 & 4_4 & \tf 4_0 & 5_1 & \tf 5_2 & 6_3}}}} 
\hspace{0.5in} \& \hspace{0.5in}
{\text{\footnotesize{\tableau[scY]{ \cr \cr & \tf 6_3 \cr & \tf 4_4 & \cr & \tf 3_0 & \tf 5_1 & & \tf 6_3 \cr \tf 1_0 & & \tf 2_2 & & \tf 4_4 & & \tf 5_1 & & \tf 6_3}}}} 
$$
%Summing over the two standard sequences and applying Corollary \ref{sscochargeLM} gives us that $\kcocharge(T) = 16$.
%Corollary \ref{sscochargeLM} along with Table \ref{tab:semistdcochargeextable} tells us that the $\kcocharge(T) = 16$.
Corollary \ref{sscochargeLM} tells us that the $\kcocharge(T) = 16$.
\end{example}

%\begin{table}[h]
%\renewcommand{\arraystretch}{1.4}
%\begin{center}
%\footnotesize{
%\begin{tabular}{| c | c | c | c | c | c |}
%\hline
%$i_r$ & diag$({i}^{\downarrow}, {(i-1)}^{\downarrow})$ & $L_i$ & Low $T_{\leq i_r}$-residue order & $M_i$ & diag$({i}^{\downarrow},c_{(i)})$ \\
%\hline 
%$1_1$ & - & $0$ & - & $0$ & 0 \\
%\hline
%$2_4$ & 0 & 0 + 1 + 0 = 1 & $2 > 3 > 4 > 0 > 1$ & $1$ & $0$  \\
%\hline
%$3_3$ & 0 & $1 - 0 = 1$ & $4 > 0 > 1 > 2 > 3$ & $1$ & $0$ \\
%\hline
%$4_0$ & 0 & $1 - 0 = 1$ & $1 > 2 > 3 > 4 > 0$ & $1$ & $0$ \\
%\hline
%$5_2$ & 0 & $1 - 0 = 1$ & $3 > 4 > 0 > 1 > 2$ & $1$ & $0$ \\
%\hline
%$6_1$ & 2 & $1 + 1 + 2 = 4$ & $3 > 4 > 0 > 1 > 2$ & $2$ & $2$ \\
%\hline
%$7_0$ & 0 & $4 - 0 = 4$ & $3 > 4 > 0 > 1 > 2$ & $3$ & $1$ \\
%\hline\hline
%$1_0$ & - & $0$ & - & $0$ & 0 \\
%\hline
%$2_2$ & 0 & $0 - 0 = 0$ & $3 > 4 > 0 > 1 > 2$ & $0$ & $0$ \\
%\hline
%$3_0$ & 0 & $0 + 1 + 0 = 1$ & $3 > 4 > 0 > 1 > 2$ & $1$ & $0$ \\
%\hline
%$4_4$ & 0 & $1 - 0 = 1$ & $0 > 1 > 2 > 3 > 4$ & $1$ & $0$ \\
%\hline
%$5_1$ & 0 & $1 - 0 = 1$ & $2 > 3 > 4 > 0 > 1$ & $1$ & $0$ \\
%\hline
%$6_3$ & 0 & $1 - 0 = 1$ & $4 > 0 > 1 > 2 > 3$ & $1$ & $0$ \\
%\hline
%\end{tabular}
%}
%\end{center}
%\caption{$\kcocharge$ of $T$ from Example \ref{stdseqchoiceex}} 
%\label{tab:semistdcochargeextable}
%\end{table}

A $\ktableau$ of shape $(k+1)$-core $\lambda$ has no cells of hook-length larger than $k$ if and only if $k > \lambda_1 + \ell(\lambda) - 2$.  For these large values of $k$, the $\ktableau$ is a semi-standard tableau of the same weight.  An observation of Corollary \ref{sscochargeLM} is that when the $\ktableau$ has shape $(k+1)$-core $\lambda$ whose cells have hook-length less than $k+1$, the $\kcocharge$ of the $\ktableau$ is the $\cocharge$ from \cite{LSfoulkes}.

%------------------------------------------------------------------------------------------------
\section{$\kcharge$ of a $\ktableau$}
%------------------------------------------------------------------------------------------------

In this section we begin by recalling the definition of $\kcharge$ of a standard $\ktableau$ from \cite{MR3115329}.  Our goal is to show that this definition is equivalent to Morse's non-negative formulation~\cite{DMComm, kschurbook}.

\begin{definition}
\cite{MR3115329}
\label{stdLch}
Given a standard $\ktableau$ $T$ of weight $(1^m)$, the highest occurrence of $i$ will be denoted $\iup$, for $1 \leq i \leq m$.  Define the index vector $I(T) = [I_1,I_2, \ldots, I_m]$ recursively by setting $I_1 = 0$, and
$$
I_i = 
\begin{cases}
I_{i-1} + 1 + diag(\iup, {(i-1)}^{\uparrow}) & \text{if } \iup \text{ is east of } {(i-1)}^{\uparrow} \\
I_{i-1} - diag(\iup, {(i-1)}^{\uparrow}) & \text{otherwise.}
\end{cases}
$$
for $2 \leq i \leq m$.  The $\kcharge$ of $T$ is the sum of the entries of $I(T)$,
$$
\kcharge(T) = \sum_{i=1}^m I_i.
$$
\end{definition}

\begin{example}
\label{LPchex}
For $k = 4$, recall the $\ktableau$ from Example \ref{diagex} is
$$
T = {\text{\footnotesize{\tableau[scY]{ 8_2 \cr 5_3 & 7_4 \cr 4_4 & 6_0 \cr 1_0 & 2_1 & 3_2 & 5_3 & 7_4 & 9_0 }}}}
$$
Applying Definition \ref{stdLch} for this $T$, Table \ref{tab:Jstdchargeextable} shows us that the $\kcharge(T) = 21$.
\end{example}

%\begin{table}[t]
%\begin{tabular}{ c | c | c }
%$i$ & diag$(\iup, (i-1)^{\uparrow})$ & $I_i$ \\ [1ex]
%\hline
%$1$ & - & $0$ \\
%\hline 
%$2$ & $0$ & $0 + 1 + 0 = 1$ \\
%\hline
%$3$ & $0$ & $1 + 1 + 0 = 2$ \\
%\hline
%$4$ & $0$ & $2 - 0 = 2$ \\
%\hline
%$5$ & $0$ & $2 - 0 = 2$ \\
%\hline
%$6$ & $0$ & $2 + 1 - 0 = 3$ \\
%\hline
%$7$ & $0$ & $3 - 0 = 3$ \\
%\hline
%$8$ & $0$ & $3 - 0 = 3$ \\
%\hline
%$9$ & $1$ & $3 + 1 + 1 = 5$ \\
%\hline
%$10$ & $1$ & $5 - 1 = 4$ \\
%\hline
%\end{tabular}
%\vspace{0.1in}
%\caption{$\kcharge$ of $T$ from Example \ref{diagex}} 
%\label{tab:LPstdchargeextable}
%\end{table}

%Similar to the $\kcocharge$ of Definition \ref{LPstdcch}, it is not immediately clear that the $\kcharge$ is a non-neagative integer.  So sometimes it is helpful to use a different formulation of $\kcharge$.  To see this alternate form, we need a new definition of the residue order from the one in Definition \ref{ktabresorderdef}.

Similar to the $\kcocharge$ of Definition \ref{LPstdcch}, it is not immediately clear that the $\kcharge$ is a non-negative integer and we use Morse's non-negative formulation~\cite{DMComm, kschurbook} which arises by altering the residue order in the natural way.

\begin{definition}
%\cite{kSchurBook}
\label{highresorderdef}
The high $T$-residue order of $\{0,\ldots,k\}$ is defined by
$$
x > x-1 > \ldots > 0 > k > \ldots > x+1\,,
$$
where $x$ is the residue of the highest addable corner of $T$.  %Note that $x = (1 - \ell(\lambda)) \mod (k+1)$, for $\lambda$ the shape of $T$.
\end{definition}

\begin{example}
For the standard $\ktableau$ $T$ of Example \ref{LPchex}, we see that
$$
T_{\leq 5} = {\text{\footnotesize{\tableau[scY]{5_3 \cr 4_4 \cr 1_0 & 2_1 & 3_2 & 5_3}}}}
$$
The high $T$-residue order is $0 > 1 > 2 > 3 > 4$, and the high $T_{\leq 5}$-residue order is $2 > 1 > 0 > 4 > 3$, which is also the high $T_{\leq 6}$-residue order and the high $T_{\leq 7}$-residue order.
\end{example}

%The alternate form of the $\kcharge$ for standard $\ktableau$ involves an index vector which is defined recursively using the residue order of Definition \ref{highresorderdef}.

\begin{definition}
%\cite{kSchurBook}
\label{chargeindexvector}
Given a standard $\ktableau$ $T$ of weight $(1^m)$, define the index vector $J(T) = [J_1, \ldots, J_m]$, starting from $J_1 = 0$, by setting for $i = 2, \ldots, m$, 
$$
J_i = 
\begin{cases}
J_{i-1} + 1 & \text{if } res(i) > res(i-1) \\
J_{i-1} & \text{otherwise},
\end{cases}
$$
where $res(i)$ and $res(i-1)$ is compared using the high $T_{\leq i}$-residue order.
\end{definition}

%Just as we showed a positive reformulation the $\kcocharge$ of a standard $\ktableau$ in Section \ref{stdkcchsection}, we will show that Definition \ref{chargeindexvector} is a positive reformulation of the $\kcharge$ of a standard $\ktableau$.  In fact, we will show that it gives a positive reformulation of the $\kcharge$ for a semi-standard $\ktableau$.

\begin{example}
\label{chstdJex}
For the $\ktableau$ $T$ of weight $(1^{9})$ of Example \ref{LPchex}, we see that Table \ref{tab:Jstdchargeextable} gives us that
$$
\sum_{i=1}^{9} \left(J_i + diag(\iup, c^{(i)}) \right) = 21\,.
$$
Example \ref{LPchex} also told us that this sum is $\kcharge(T)$.
\end{example}

\begin{table}[t]
\renewcommand{\arraystretch}{1.4}
\begin{center}
\footnotesize{
\begin{tabular}{| c | c |c | c | c | c |}
\hline
$i$ & diag$(\iup, (i-1)^{\uparrow})$ & $I_i$ & High $T_{\leq i}$-residue order & $J_i$ & diag$(\iup, c^{(i)})$\\ 
\hline
$1$ & - & 0 & - & $0$ & 0 \\
\hline 
$2$ & 0 & $0 + 1 + 0 = 1$ & $4 > 3 > 2 > 1 > 0$ & $1$ & $0$\\
\hline
$3$ & 0 & $1 + 1 + 0 = 2$ & $4 > 3 > 2 > 1 > 0$ & $2$ & $0$\\
\hline
$4$ & 0 & $2 - 0 = 2$ & $3 > 2 > 1 > 0 > 4$ & $2$ & $0$ \\
\hline
$5$ & 0 & $2 - 0 = 2$ & $2 > 1 > 0 > 4 > 3$ & $2$ & $0$\\
\hline
$6$ & 0 & $2 + 1 - 0 = 3$ & $2 > 1 > 0 > 4 > 3$ & $3$ & $0$\\
\hline
$7$ & 0 & $3 - 0 = 3$ & $2 > 1 > 0 > 4 > 3$ & $3$ & $0$\\
\hline
$8$ & 0 & $3 - 0 = 3$ & $1 > 0 > 4 > 3 > 2$ & $3$ & $0$\\
\hline
$9$ & 1 & $3 + 1 + 1 = 5$ & $1 > 0 > 4 > 3 > 2$ & $4$ & $1$\\
\hline
%$10$ & 1 & $5 - 1 = 4$ & $0 > 1 > 2 > 3 > 4$ & $4$ & $0$\\
%\hline
\end{tabular}
}
\end{center}
%\vspace{0.1in}
\caption{$\kcharge$ of $T$ from Example \ref{diagex}} 
\label{tab:Jstdchargeextable}
\end{table}

Just as we showed a non-negative reformulation the $\kcocharge$ of a standard $\ktableau$ in Section \ref{stdkcchsection}, we will show that Definition \ref{chargeindexvector} is a non-negative reformulation of the $\kcharge$ of a standard $\ktableau$.  In fact, we will show that it gives a non-negative reformulation of the $\kcharge$ for a semi-standard $\ktableau$.  We begin with some key Lemma's.

\begin{lemma}
\label{betweendiags}
If $T$ is a standard $\ktableau$ of weight $(1^m)$, then any diagonal of the same residue as $i$ and between the diagonals with $\iup$ and $\idown$ must also have an $i$, for $1 \leq i \leq m$.
\end{lemma}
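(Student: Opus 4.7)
The plan is to establish three facts: (i) the cells labeled $i$ in $T$ are precisely the addable corners of residue $r$ in $\lambda^{(i-1)}$ (the sub-shape of $T$ on letters $\leq i-1$), where $r$ is the common residue of $i$; (ii) these addable corners lie on a consecutive arithmetic progression of residue-$r$ diagonals with common difference $k+1$; and (iii) $\iup$ and $\idown$ occupy the two extreme diagonals of this progression. The lemma then follows because any residue-$r$ diagonal between $\iup$ and $\idown$ is an intermediate term in the progression and hence carries a cell labeled $i$.

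For (i), since each intermediate shape $\lambda^{(j)}$ of $T$ is a $(k+1)$-core, if only a proper subset $S$ of the addable $r$-corners of $\lambda^{(i-1)}$ were chosen for label $i$, then every $(a,b) \in S$ would become a removable corner of $\lambda^{(i)}$: its neighbors $(a+1,b)$ and $(a,b+1)$ have residues $r-1$ and $r+1$, hence are not added at step $i$, and both lie outside $\lambda^{(i-1)}$ since $(a,b) \notin \lambda^{(i-1)}$ forces $\mu^{(i-1)}_a \leq b-1$. Meanwhile any addable $r$-corner not in $S$ remains addable in $\lambda^{(i)}$. Then $\lambda^{(i)}$ would possess both addable and removable corners of residue $r$, contradicting Remark \ref{lmcoreremark}, so $S$ is all of them.

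For (ii), the interior addable $r$-corners $(a,b)$ with $a,b \geq 2$ correspond via $(a,b) \mapsto (a-1,b-1)$ to extremal cells of residue $r$ in $\lambda^{(i-1)}$ that are neither at the end of their row nor at the top of their column; two extremal cells of the same residue are comparable in the NW-SE partial order (a rectangle argument rules out the SW-NE configuration), so these extremal cells are totally ordered NW to SE, and by Proposition \ref{lmcoreprop} the ``end-of-row'' ones form a NW-prefix and the ``top-of-column'' ones a SE-suffix, leaving a consecutive middle block corresponding to the addable corners. Using the $(k+1)$-core bound $\mu_i - \mu_{i+1} \leq k$, one further checks that the row-1 extension $(1,\mu_1+1)$ and the column-1 extension, when they carry residue $r$, extend this block by exactly one term of the arithmetic progression without gap. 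For (iii), another rectangle argument shows that among the addable $r$-corners sorted by column the rows are strictly decreasing: if $b_j < b_{j+1}$ and $a_j \leq a_{j+1}$ then $\mu^{(i-1)}_{a_j} \geq \mu^{(i-1)}_{a_{j+1}} \geq b_{j+1}-1 \geq b_j$, contradicting $\mu^{(i-1)}_{a_j} \leq b_j - 1$ from addability of $(a_j,b_j)$; hence $\iup$ sits on the smallest diagonal of the block and $\idown$ on the largest. The main obstacle is step (ii): stitching the row-1 and column-1 extensions onto the interior middle block requires combining Proposition \ref{lmcoreprop} with the $(k+1)$-core constraint on consecutive row lengths in a careful case analysis.
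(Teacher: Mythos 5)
Your three structural claims are all true and (i) and (iii) are argued soundly, but the proof has a genuine gap exactly where you flag it: the ``stitching'' in step (ii), and that step is not a peripheral detail --- after (i) and (iii), the lemma is \emph{equivalent} to the no-gap statement of (ii), so deferring the boundary analysis to ``a careful case analysis'' leaves the essential content unproven. Concretely, your prefix/suffix argument via Proposition \ref{lmcoreprop} only governs the interior addable corners, those $(a,b)$ with $a,b\ge 2$, because only those correspond to extremal cells $(a-1,b-1)$ of the sub-shape. When $\iup$ is the column-one corner or $\idown$ is the row-one corner $(1,\lambda^{(i-1)}_1+1)$ --- a completely typical situation --- the corresponding diagonal carries no extremal cell of $\lambda^{(i-1)}$, and the interval structure of your ``neither'' block says nothing about whether the next residue-$r$ diagonal inward carries an addable corner. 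This is precisely the point where the $(k+1)$-core hypothesis must do work beyond Proposition \ref{lmcoreprop}: for a general partition the addable corners of a fixed residue need \emph{not} lie on consecutive residue-$r$ diagonals (e.g.\ for $k+1=3$ the partition $(4,4)$ has addable corners of residue $1$ at contents $4$ and $-2$, skipping content $1$, and both are boundary corners). The statement you need is true for cores --- it drops out of the abacus/runner description, where the addable $r$-corners are the runner-$r$ positions in a window of length governed by two runner heights --- but none of that is in your write-up, and I do not see how to extract it from Proposition \ref{lmcoreprop} alone without real additional work.

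The paper sidesteps all of this by working with removable corners of $\Tleqi$ rather than addable corners of $T_{\leq i-1}$. Since all cells labelled $i$ share one residue, no two are adjacent, so each is simultaneously at the end of its row and at the top of its column in $\Tleqi$. Given a residue-$r$ diagonal strictly between those of $\iup$ and $\idown$, its top cell $c$ is an extremal cell of $\Tleqi$ sandwiched between $\iup$ (weakly NW, top of its column) and $\idown$ (weakly SE, end of its row); Proposition \ref{lmcoreprop} then forces $c$ to be a removable corner of $\Tleqi$ of residue $r$, and if $c$ did not contain an $i$ it would be a removable corner of $T_{\leq i-1}$, which also has addable corners of residue $r$, contradicting Remark \ref{lmcoreremark}. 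Intermediate diagonals automatically meet the shape and their top cells are automatically sandwiched, so no row-one or column-one cases ever arise. I would keep your observation (i) but replace (ii)--(iii) with this direct argument.
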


\begin{proof}
For each $1 \leq i \leq m$, let $c$ be the highest cell on any diagonal of $\Tleqi$, where $c$ is of the same reside as $\iup$ and the diagonal with $c$ is between the diagonals with $\iup$ and $\idown$.  If $c$ is of the same residue as $\iup$ and $\idown$, then Proposition \ref{lmcoreprop} tells us that $c$ is at the end of its row and at the top of its column in $\Tleqi$.  Remark \ref{lmcoreremark} tells us that $c$ must contain an $i$.
\end{proof}

\begin{lemma}
\label{diaglemma}
Let $T$ be a standard $\ktableau$ of weight $(1^m)$.  If $\beta_i$ is the number of cells of $T$ filled with $i$, then 
$$
\beta_i + diag(\iup, c^{(i)}) + diag(\idown, c_{(i)})
$$
is the number of diagonals of residue $res(i)$ in $\Tleqi$, for $1 \leq i \leq m$.
\end{lemma}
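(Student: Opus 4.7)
The plan is to partition the set of residue-$res(i)$ literal diagonals of $T_{\leq i}$ into three intervals determined by $d_{\iup}$ and $d_{\idown}$, and to identify each interval with one of the three summands on the left-hand side.

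Write $\mu$ for the shape of $T_{\leq i}$. The highest addable cell of $\mu$ lies at $(\ell(\mu)+1,1)$, on diagonal $d_{c^{(i)}} = -\ell(\mu)$, and the lowest addable cell lies at $(1,\mu_1+1)$, on diagonal $d_{c_{(i)}} = \mu_1$, so the non-empty literal diagonals of $\mu$ are exactly the integers $d$ with $d_{c^{(i)}} < d < d_{c_{(i)}}$. Splitting this range at $d_{\iup}$ and $d_{\idown}$ decomposes the residue-$res(i)$ diagonals of $\mu$ into three subsets: those with $d_{c^{(i)}} < d < d_{\iup}$, those with $d_{\iup} \le d \le d_{\idown}$, and those with $d_{\idown} < d < d_{c_{(i)}}$. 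By Definition~\ref{diagdef}, the first and third have cardinalities $diag(\iup, c^{(i)})$ and $diag(\idown, c_{(i)})$ respectively.

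The heart of the proof is to show that the middle subset has cardinality $\beta_i$. Lemma~\ref{betweendiags} guarantees that every residue-$res(i)$ diagonal with $d_{\iup} \le d \le d_{\idown}$ contains at least one cell labelled $i$, so this subset has at most $\beta_i$ elements. For the matching lower bound I would prove that no two cells labelled $i$ share a literal diagonal: if $(r,c)$ and $(r+s,c+s)$ with $s \ge 1$ were both labelled $i$, then for each $1 \le t \le s$ the cell $(r+t-1,c+t)$ lies in $\mu$ by the partition property, and the chain of inequalities $T(r+t-1,c+t-1) \le T(r+t-1,c+t) < T(r+t,c+t)$, coming from row-weakly-increasing and column-strictly-increasing, iterates to $T(r+s,c+s) > T(r,c) = i$, a contradiction. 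Therefore distinct $i$-cells occupy distinct diagonals and the middle subset has exactly $\beta_i$ elements.

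Adding the three cardinalities yields $\beta_i + diag(\iup, c^{(i)}) + diag(\idown, c_{(i)})$ equal to the number of residue-$res(i)$ diagonals in $T_{\leq i}$, as claimed. The main conceptual obstacle is the strict-increase-along-diagonals observation in the middle step; once that is in hand, the rest of the argument is a routine manipulation of the diagonal indices $d = j - i$ and their residues modulo $k+1$.
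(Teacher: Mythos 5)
Your three-way split of the residue-$res(i)$ diagonals at $\iup$ and $\idown$, with Lemma~\ref{betweendiags} handling the middle block, is exactly the paper's argument (your explicit check that distinct $i$-cells lie on distinct diagonals is a detail the paper leaves implicit). One caution: for the third block, Definition~\ref{diagdef} literally counts diagonals of residue $res(c_{(i)})$ — the lower cell there is the addable corner in row~$1$, not $\idown$ — so identifying that count with the number of residue-$res(i)$ diagonals below $\idown$ is not purely "by definition"; it needs the one-line interleaving observation (consecutive diagonals of a fixed residue are $k+1$ apart, so the two counts over the same interval agree), which is precisely what the paper's opening two sentences supply.
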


\begin{proof}
In $\Tleqi$, there is a diagonal of residue $res(c^{(i)})$ above the diagonal with $\iup$ if and only if there is a diagonal of residue $res(i)$ above the diagonal with $\iup$.  Similarly, in $\Tleqi$, there is a diagonal of residue $res(c_{(i)})$ below the diagonal with $\idown$ if and only if there is a diagonal of residue $res(i)$ below the diagonal with $\idown$.  Lemma \ref{betweendiags} tells us that $\beta_i$ is the number of diagonals of residue $res(i)$ between the diagonals with $\iup$ and $\idown$ in $\Tleqi$.
\end{proof}

Using Lemma \ref{diaglemma} and the following definition, we show the connection between Morse's statistics ~\cite{DMComm, kschurbook} of Definition's \ref{cochargeindexvector} and \ref{chargeindexvector}.

\begin{definition}
\label{intkcore}
%\cite{MR3115329}
The $k$-interior of a partition $\lambda$ is the sub-partition made of the cells of $\lambda$ with hook-length larger than $k$:
$$
\text{Int}^k(\lambda) = \{c \in \lambda \mid h_{\lambda}(c) > k\}\,.
$$
\end{definition} 
For a standard $\ktableau$ $T$ of shape $\lambda$ and weight $(1^m)$, observe that
$$
|Int^k(\lambda)| = \sum_{i=1}^m (\beta_i - 1)\,,
$$
where $\beta_i$ is the number of cells of $T$ filled with $i$.
\begin{theorem}
\label{JMequation}
Given a standard $\ktableau$ $T$ of weight $(1^m)$ and shape $\lambda$,
$$
\sum_{i=1}^m \left(J_i + diag(\iup, c^{(i)}) \right) = \frac{m(m-1)}{2} - |Int^k(\lambda)| - \sum_{i=1}^m \left(M_i + diag(\idown, c_{(i)}) \right)\,.
$$
\end{theorem}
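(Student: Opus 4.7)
The plan is to reduce Theorem~\ref{JMequation} to a pointwise identity at each step $i$ and prove that identity by induction, where the inductive step distills to an intrinsic identity about $(k+1)$-cores.

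First, combining Lemma~\ref{diaglemma} with the observation $|Int^k(\lambda)| = \sum_{i=1}^m (\beta_i - 1)$ stated just above the theorem, I get
$$|Int^k(\lambda)| = \sum_{i=1}^m d_i - m - \sum_{i=1}^m \bigl( diag(\iup, c^{(i)}) + diag(\idown, c_{(i)})\bigr),$$
where $d_i$ is the number of diagonals of residue $\res(i)$ in $T_{\leq i}$. Substituting this into the right-hand side of the theorem, the $diag$ contributions cancel on both sides, and the theorem reduces to $\sum_{i=1}^m (J_i + M_i + d_i) = \binom{m+1}{2} = \sum_{i=1}^m i$. It therefore suffices to prove the pointwise identity $J_i + M_i + d_i = i$ for every $1 \leq i \leq m$.

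I would establish this by induction on $i$; the base case $i=1$ is immediate. For the inductive step, set $\mu = T_{\leq i}$, $r = \res(i)$, $r' = \res(i-1)$. The cells of letter $i-1$ are removable corners of $T_{\leq i-1}$ of residue $r'$ while the cells of letter $i$ added to form $\mu$ are addable corners of residue $r$, so Remark~\ref{lmcoreremark} forces $r \neq r'$; since no cells of residue $r'$ are added, $d_{i-1}$ equals $D_\mu(r')$, the number of diagonals of residue $r'$ in $\mu$. The inductive step is thus equivalent to the intrinsic identity
$$D_\mu(r) - D_\mu(r') + [r >_{h_\mu} r'] + [r >_{l_\mu} r'] = 1,$$
valid for any nonempty $(k+1)$-core $\mu$ and any pair of distinct residues $r, r'$.

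To prove this intrinsic identity, I use the fact that in any nonempty partition the nonempty diagonals are exactly those indexed by $d \in [1 - \ell(\mu),\, \mu_1 - 1]$, so $D_\mu(r)$ equals the number of such $d$ satisfying $d \equiv r \pmod{k+1}$. Writing $N = \mu_1 + \ell(\mu) - 1 = q(k+1) + s_0$ with $0 \leq s_0 \leq k$, the residues split into an ``excess'' arc $\mathcal{A}$ consisting of $s_0$ consecutive residues (those with $D_\mu = q+1$) and its complement (those with $D_\mu = q$). A direct check identifies the highest addable corner of $\mu$, namely $(\ell(\mu)+1, 1)$, as having residue $x_\mu$ lying immediately before $\mathcal{A}$ cyclically, and the lowest addable corner $(1, \mu_1+1)$ as having residue $y_\mu$ lying immediately after $\mathcal{A}$. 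This yields a clean structural picture: in both the high and low $\mu$-residue orders, the residues in $\mathcal{A}$ occupy the last $s_0$ positions while those in the complement occupy the first $k+1-s_0$ positions, and the two orders are reverses of each other within each of these two blocks. A four-case analysis according to whether each of $r, r'$ lies in $\mathcal{A}$ or its complement then verifies the identity in every case.

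The main obstacle is isolating and proving the intrinsic $(k+1)$-core identity above; once it is in place, both the initial reduction and the inductive bookkeeping are essentially routine. The delicate technical step is identifying $x_\mu$ and $y_\mu$ as the residues bordering the excess arc $\mathcal{A}$, which rests on the fact that the extreme addable corners of any nonempty partition are the cells $(\ell(\mu)+1, 1)$ and $(1, \mu_1+1)$.
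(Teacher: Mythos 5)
Your argument is correct, and its skeleton coincides with the paper's: the paper also reduces, via Lemma~\ref{diaglemma} and $|Int^k(\lambda)|=\sum_i(\beta_i-1)$, to the pointwise identity $J_i+M_i+d_i=i$ (its displayed inductive claim $J_{i+1}+diag((i+1)^{\uparrow},c^{(i+1)})=i-\beta_{i+1}+1-M_{i+1}-diag((i+1)^{\downarrow},c_{(i+1)})$ is exactly this statement in disguise), and it likewise proceeds by induction with a case split on how $res(i+1)$ compares to $res(i)$ under the high and low orders. Where you genuinely diverge is in how the inductive step is discharged. The paper treats the four cases separately, asserts the needed diagonal count without proof (e.g.\ that when $res(i+1)>res(i)$ under both orders, the number of diagonals of residue $res(i)$ in $T_{\leq i+1}$ exceeds that of residue $res(i+1)$ by one), and leaves three of the four cases to the reader. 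You instead isolate the single intrinsic identity $D_\mu(r)-D_\mu(r')+[r>_{h_\mu}r']+[r>_{l_\mu}r']=1$ and prove it uniformly from the structure of the excess arc $\mathcal{A}$ of length $N\bmod(k+1)$, $N=\mu_1+\ell(\mu)-1$, together with the observation that the two order anchors $(\ell(\mu)+1,1)$ and $(1,\mu_1+1)$ sit at the two ends of $\mathcal{A}$; your preliminary step $d_{i-1}=D_\mu(r')$ (no diagonal of residue $r'$ is created when only residue-$r$ cells are added) and the use of Remark~\ref{lmcoreremark} to get $r\neq r'$ are both sound. What this buys is a complete, case-uniform justification of precisely the diagonal-count facts the paper takes for granted, at the minor cost of the convention check that $(\ell(\mu)+1,1)$ and $(1,\mu_1+1)$ are the highest and lowest addable cells --- a convention the paper's own residue-order tables confirm. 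I see no gap.
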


\begin{proof}
The proof is by induction on the weight of $T$.  The case when $i=1$ is clear.  Using the induction hypothesis, we must show that
$$
J_{i+1} + diag((i+1)^{\uparrow}, c^{(i+1)}) = i - \beta_{i+1} + 1 - M_{i+1} - diag((i+1)^{\downarrow}, c_{(i+1)})\,,
$$
where $\beta_{i+1}$ is the number of cells of $T$ filled with $i+1$.

We first consider the case that $res(i+1) > res(i)$ under both the high and low $T_{\leq i+1}$-residue order.  Under this case we have $J_{i+1} = J_i + 1$, $M_{i+1} = M_i + 1$ and the number of diagonals of residue $res(i)$ is one more than the number of diagonals with residue $res(i+1)$ in $T_{\leq i+1}$.  This tells us that 
$
J_{i+1} + diag((i+1)^{\uparrow}, c^{(i+1)}) = J_i + 1 +  diag((i+1)^{\uparrow}, c^{(i+1)})\,.
$
The induction hypothesis tells us that $J_i + 1 +  diag((i+1)^{\uparrow}, c^{(i+1)})$ is equal to
\begin{equation}
\label{proofeq2}
i - 1 - \beta_{i} + 1 - M_i - diag(i^{\downarrow}, c_{(i)}) - diag(i^{\uparrow}, c^{(i)}) + 1 + diag((i+1)^{\uparrow}, c^{(i+1)})\,.
\end{equation}
Lemma \ref{diaglemma} tells us that $\beta_i + diag(i^{\downarrow}, c_{(i)}) + diag(i^{\uparrow}, c^{(i)})$ is the number of diagonals of residue $res(i)$ in $T_{\leq i}$.  The number of diagonals of residue $res(i)$ is one more than the number of diagonals of residue $res(i+1)$ in $T_{\leq i+1}$.  Lemma \ref{diaglemma} tells us again that the number of diagonals of residue $res(i+1)$ in $T_{\leq i+1}$ is $\beta_{i+1} + diag((i+1)^{\uparrow},c^{(i+1)}) + diag((i+1)^{\downarrow}, c_{(i+1)})$.
%$$
%\beta_m + diag(m^{\downarrow}, c_{(m)}) + diag(m^{\uparrow}, c^{(m)}) = \beta_{m+1}+ diag((m+1)^{\uparrow},c^{(m+1)}) + diag((m+1)^{\downarrow}, c_{(m+1)}) +1\,.
%$$
Furthermore, since $M_{i+1} = M_i + 1$, then \eqref{proofeq2} reduces to
$
i - \beta_{i+1} + 1 - M_{i+1} - diag((i+1)^{\downarrow}, c_{(i+1)})\,. 
$

%The proof of the other three cases below all follow similarly.
The other cases, which are left to the reader to prove, are
\vspace{-0.1in}
\begin{enumerate}
\item $res(i+1) > res(i)$ under the high $T_{\leq i+1}$-residue order, and $res(i+1) < res(i)$ under the low $T_{\leq i+1}$-residue order. \vspace{-0.1in}
\item $res(i+1) < res(i)$ under the high $T_{\leq i+1}$-residue order, and $res(i+1) > res(i)$ under the low $T_{\leq i+1}$-residue order. \vspace{-0.1in}
\item $res(i+1) < res(i)$ under the high $T_{\leq i+1}$-residue order, and $res(i+1) < res(i)$ under the low $T_{\leq i+1}$-residue order. \vspace{-0.1in}
\end{enumerate}
\end{proof}

\begin{example}
For $k = 4$, recall that the standard $k$-tableau of weight $(1^{9})$ from Example \ref{diagex} is
$$
T = {\text{\footnotesize{\tableau[scY]{8_2 \cr 5_3 & 7_4 \cr 4_4 & 6_0 \cr 1_0 & 2_1 & 3_2 & 5_3 & 7_4 & 9_0}}}}
$$
For this $T$, Example's \ref{stdcochargeex} and \ref{chstdJex} tell us
$$
\sum_{i=1}^m \left(M_i + diag(\idown, c_{(i)}) \right) = 13 \hspace{0.2in} \& \hspace {0.2in} \sum_{i=1}^m \left(J_i + diag(\iup, c^{(i)}) \right) = 21\,.
$$
These equations along with the fact that $|Int^4((7,3,2,1,1))| = 2$ and $10(10-1)/{2} = 36$ agrees with Theorem \ref{JMequation}.
\end{example}

For a given semi-standard $\ktableau$ $T$, we can sum over the standard sequences of $T$ to generalize Theorem \ref{JMequation}.

\begin{theorem}
\label{JMequationssk}
Let $T$ be a semi-standard $\ktableau$ of weight $\mu$ and shape $\lambda$.  If $S$ is the set of all standard sequences of $T$, then
$$
\sum_{s \in S}\sum_{i=1}^{\ell(s)} \left(J_i^{(s)} + diag(\iup, c^{(i)}) \right) = n(\mu) - |Int^k(\lambda)| - \sum_{s \in S}\sum_{i=1}^{\ell(s)} \left(M_i^{(s)} + diag(\idown, c_{(i)}) \right),
$$
where $\iup$ and $\idown$ are the highest and lowest occurrences of $i$ in $s$, respectively, and $c^{(i)}$ and $c_{(i)}$ are the highest and lowest addable cells of $T_{\leq i_r}$, respectively.
\end{theorem}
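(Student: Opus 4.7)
The plan is to reduce Theorem~\ref{JMequationssk} to Theorem~\ref{JMequation} by applying the latter to each standard sequence $s \in S$ and then summing the resulting identities. The quantities $J_i^{(s)}$, $M_i^{(s)}$, $diag(\iup, c^{(i)})$, and $diag(\idown, c_{(i)})$ are all determined by the growing chain of sub-tableaux $T_{\leq 1_r} \subset T_{\leq 2_r} \subset \cdots \subset T_{\leq \ell(s)_r}$ associated to $s$, in complete analogy with the chain $T_{\leq 1} \subset T_{\leq 2} \subset \cdots \subset T_{\leq m}$ used in the standard case.  Consequently the inductive argument of Theorem~\ref{JMequation}, which rests on Lemmas~\ref{betweendiags} and~\ref{diaglemma}, carries over to each standard sequence, yielding
\begin{equation}
\label{persidentity}
\sum_{i=1}^{\ell(s)} \bigl(J_i^{(s)} + diag(\iup, c^{(i)}) \bigr) + \sum_{i=1}^{\ell(s)} \bigl(M_i^{(s)} + diag(\idown, c_{(i)}) \bigr) = \binom{\ell(s)}{2} - \sum_{i \in s}(\beta_i^{(s)} - 1),
\end{equation}
where $\beta_i^{(s)}$ is the number of cells in the standard sequence $s$ filled with the letter $i$, and the subtracted term $\sum_{i \in s}(\beta_i^{(s)} - 1)$ plays the role of $|Int^k(\lambda)|$ as in the observation following Definition~\ref{intkcore}.

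I would then sum \eqref{persidentity} over $s \in S$ and verify two combinatorial identities.  Enumerating the standard sequences as $s_1, \ldots, s_{\mu_1}$ following Definition~\ref{stdseqchoicedef}, each letter $i$ lies in exactly $\mu_i$ standard sequences (one for each of its $\mu_i$ distinct residues), so $\ell(s_p) = |\{i : \mu_i \geq p\}| = \mu'_p$.  The classical identity $n(\mu) = \sum_p \binom{\mu'_p}{2}$ then gives
$$
\sum_{s \in S} \binom{\ell(s)}{2} = \sum_p \binom{\mu'_p}{2} = n(\mu).
$$
Since the standard sequences partition the cells of $T$ (each cell belongs to the unique standard sequence that selected its letter-residue pair) and $\sum_{s\in S} \ell(s) = \sum_i \mu_i = m$, we also obtain
$$
\sum_{s \in S}\sum_{i \in s}(\beta_i^{(s)} - 1) = |\lambda| - m = |Int^k(\lambda)|,
$$
using $|\lambda| = m + |Int^k(\lambda)|$.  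Substituting into the sum of \eqref{persidentity} produces the claimed identity.

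The main obstacle is justifying that Theorem~\ref{JMequation}'s proof truly transfers to each standard sequence.  That proof relies on the $(k+1)$-core structure of the subtableaux $T_{\leq i}$, and one must check that each $T_{\leq i_r}$ arising from a standard sequence is likewise a $(k+1)$-core, and that the residue-counting content of Lemma~\ref{diaglemma}, identifying $\beta_i + diag(\iup, c^{(i)}) + diag(\idown, c_{(i)})$ with the number of diagonals of residue $res(i)$ in the current subtableau, transfers without change to the multi-cell increments that occur when a letter of $s$ is placed at several cells of the same residue.  This hinges on the counter-clockwise residue selection in Definition~\ref{stdseqchoicedef} ensuring that each successive letter in a standard sequence adds a valid $(k+1)$-core extension; verifying this structural property is the key technical content of the proof.
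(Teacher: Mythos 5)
Your proposal follows essentially the same route as the paper's proof: decompose $n(\mu)$ and $|Int^k(\lambda)|$ as sums of per-sequence contributions $\binom{\ell(s)}{2}$ and $\sum_i(\beta_i-1)$ respectively, then apply Theorem~\ref{JMequation} to each standard sequence and sum over $S$. Your added detail (the identification $\ell(s_p)=\mu'_p$ with the identity $n(\mu)=\sum_p\binom{\mu'_p}{2}$, and the count $|\lambda|=m+|Int^k(\lambda)|$) only makes explicit what the paper asserts, and the technical point you flag at the end --- that Theorem~\ref{JMequation} must be checked to transfer to the subtableaux $T_{\leq i_r}$ of a standard sequence --- is likewise passed over silently in the paper, which simply states that applying Theorem~\ref{JMequation} to each $s\in S$ finishes the proof.
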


\begin{proof}
Given the $\ktableau$ $T$, let $S$ be the set of all standard sequences from Definition \ref{stdseqchoicedef}.  Each standard sequence $s \in S$ contributes $\sum_{i=1}^{\ell(s)} (\beta_i -1)$ to $|Int^k(\lambda)|$, where $\beta_i$ is the number of cells of $T$ filled with $i \in s$.  Summing over all the standard sequences of $S$, we see that
$$
|Int^k(\lambda)| = \sum_{s \in S} \sum_{i=1}^{\ell(s)} (\beta_i - 1)\,,
$$
where $\lambda$ is the shape of $T$.

For the $n(\mu)$ term, each standard sequence $s \in S$ contributes $(\ell(s)(\ell(s) - 1))/2$ to $n(\mu)$.  Summing over the standard sequences of $S$, we see that
$$
n(\mu) = \sum_{s \in S} \left( \frac{\ell(s)(\ell(s) - 1)}{2} \right)\,,
$$
where $\mu$ is the weight of $T$.  Applying Theorem \ref{JMequation} to each standard sequence $s \in S$ finishes the proof.
\end{proof}

We have just shown that Morse's non-negative reformulation of the $\kcocharge$ is related to a generalization of Definition \ref{chargeindexvector}.  Furthermore, it is the work of \cite{MR3115329} which relates Definition \ref{LPstdcch} of the $\kcocharge$ to Definition \ref{stdLch} of the $\kcharge$ of a semi-standard $\ktableau$.

\begin{theorem}
\cite{MR3115329}
\label{sschcchrelL}
Given a semi-standard $\ktableau$ $T$ of weight $\mu$ and shape $\lambda$, 
$$
\kcharge(T) = n(\mu) - |Int^k(\lambda)| - \kcocharge(T).
$$
\end{theorem}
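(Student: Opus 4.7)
The plan is to deduce the identity by combining three results: Theorem~\ref{JMequationssk}, Corollary~\ref{sscochargeLM}, and a $\kcharge$-side analog of Corollary~\ref{sscochargeLM}. Concretely, the goal is to prove that for any semi-standard $\ktableau$ $T$,
$$
\kcharge(T) \;=\; \sum_{s \in S} \sum_{i=1}^{\ell(s)} \Bigl( J_i^{(s)} + diag(\iup, c^{(i)}) \Bigr),
$$
where $S$ is the set of standard sequences of $T$ from Definition~\ref{stdseqchoicedef} and $J_i^{(s)}$ is defined from the high $T_{\leq i_r}$-residue order in direct analogy to $M_i^{(s)}$ in Definition~\ref{Jsstdcch}. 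Granting this identity, Theorem~\ref{JMequationssk} rewrites its right-hand side as $n(\mu) - |Int^k(\lambda)| - \sum_{s,i} \bigl( M_i^{(s)} + diag(\idown, c_{(i)}) \bigr)$, and Corollary~\ref{sscochargeLM} identifies the trailing sum as $\kcocharge(T)$.

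To produce the displayed charge identity, I would first establish the standard-case analog of Theorem~\ref{cochargeLM}: for a standard $\ktableau$ $T$ of weight $(1^m)$,
$$
\kcharge(T) \;=\; \sum_{i=1}^{m} \Bigl( J_i + diag(\iup, c^{(i)}) \Bigr).
$$
The proof is by induction on $m$, paralleling Theorem~\ref{cochargeLM} with the roles of south/north interchanged with east/west. The base case $I_1 = J_1 = 0$ is immediate, and the inductive step splits into cases according to (a) whether $\iup$ lies weakly east or weakly west of $(i-1)^{\uparrow}$, (b) whether the highest extremal cell of residue $res(i)$ in $\Tleqi$ lies in the top row, and (c) whether $res(i) > res(i-1)$ in the high $\Tleqi$-residue order. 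Proposition~\ref{lmcoreprop} and Remark~\ref{lmcoreremark} guarantee the correct placement of extremal cells of shared residue, so the diagonal-counting identities closing each case run symmetric to those of Theorem~\ref{cochargeLM}.

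The semi-standard extension is then obtained by extending Definition~\ref{stdLch} to semi-standard $\ktableaux$ via summing over the standard sequences of Definition~\ref{stdseqchoicedef}, exactly as Definition~\ref{LPsstdcch} extends Definition~\ref{LPstdcch} and Corollary~\ref{sscochargeLM} follows from Theorem~\ref{cochargeLM}. Applying the standard-case identity to each $s \in S$ and summing yields the displayed charge identity, after which the three-way combination above delivers $\kcharge(T) = n(\mu) - |Int^k(\lambda)| - \kcocharge(T)$.

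The main obstacle is the case analysis in the standard-case identity. While formally symmetric to the proof of Theorem~\ref{cochargeLM}, the argument requires verifying that each sub-case closes under the high-residue-order — particularly the sub-cases where $\iup$ and $(i-1)^{\uparrow}$ lie on opposite sides of an extremal cell of residue $res(i)$ not in the top row. Lemma~\ref{betweendiags}, which is symmetric in $\iup$ and $\idown$, together with Lemma~\ref{diaglemma} handle the diagonal bookkeeping, but each sub-case still requires explicit verification.
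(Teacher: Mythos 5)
The first thing to note is that the paper contains no proof of Theorem~\ref{sschcchrelL}: it is imported verbatim from Lapointe--Pinto~\cite{MR3115329} and used as an external input, so there is no internal argument to measure your attempt against. Your proposal reconstructs the theorem from the paper's other results, and in doing so it inverts the paper's logical architecture. The paper's chain is: prove Theorem~\ref{JMequationssk} and Corollary~\ref{sscochargeLM} directly, cite Theorem~\ref{sschcchrelL}, and only then conclude Corollary~\ref{mainresult}, i.e.\ $\kcharge(T)=\sum_{s,i}\bigl(J_i^{(s)}+diag(\iup,c^{(i)})\bigr)$. You propose to prove Corollary~\ref{mainresult} first, by a direct induction on the high-residue-order side mirroring Theorem~\ref{cochargeLM}, and then run the other two identities backwards. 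The combination step itself is sound and non-circular, since neither Theorem~\ref{JMequationssk} nor Corollary~\ref{sscochargeLM} depends on Theorem~\ref{sschcchrelL}; if your first step holds, the theorem follows.

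The gap is that the first step is precisely the work the paper's architecture is designed to avoid, and you leave it as an assertion of symmetry rather than an argument. The charge recursion in Definition~\ref{stdLch} branches on whether $\iup$ is east of $(i-1)^{\uparrow}$, while the cocharge recursion in Definition~\ref{LPstdcch} branches on whether $(i-1)^{\downarrow}$ is strictly below $\idown$: the conditions are not literal transposes of one another (east versus below, highest versus lowest occurrences, and the roles of $i$ and $i-1$ interchanged), so each sub-case of your induction must actually be closed under the high $\Tleqi$-residue order, and the claim that the bookkeeping runs ``symmetric to Theorem~\ref{cochargeLM}'' carries the entire burden of the proof. A second, quieter gap: Theorem~\ref{sschcchrelL} is a statement about the Lapointe--Pinto $\kcharge$ of a \emph{semi-standard} $\ktableau$, but the paper only records Definition~\ref{stdLch} in the standard case. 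Your extension by summing Definition~\ref{stdLch} over the standard sequences of Definition~\ref{stdseqchoicedef} must be shown to coincide with the semi-standard statistic of \cite{MR3115329}; if their standardization convention for charge differs from the one used here for cocharge, your argument would prove the identity for a different statistic. Until both points are settled, the proposal is a plausible program rather than a proof.
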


Finally, we can state a non-negative reformulation of the $\kcharge$ of a semi-standard $\ktableau$ by applying Corollary \ref{sscochargeLM} and Theorem's \ref{JMequationssk} and \ref{sschcchrelL}.

\begin{corollary}
\label{mainresult}
Let $T$ be a semi-standard $\ktableau$.  If $S$ is the set of all standard sequences of $T$, then the non-negative
$$
\kcharge(T) = \sum_{s \in S} \sum_{i=1}^{\ell(s)} \left( J_i^{(s)} + diag(\iup, c^{(i)}) \right)\,,
$$
where $\iup$ is the highest occurrence of $i$ in $s$ and $c^{(i)}$ is the highest addable cell of $T_{\leq i_r}$.
\end{corollary}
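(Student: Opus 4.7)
The plan is to obtain the claim by algebraically combining three results that are already in hand: Theorem~\ref{sschcchrelL}, which says $\kcharge(T) = n(\mu) - |\text{Int}^k(\lambda)| - \kcocharge(T)$; Corollary~\ref{sscochargeLM}, which rewrites $\kcocharge(T)$ as the manifestly non-negative double sum
$$
\sum_{s \in S}\sum_{i=1}^{\ell(s)} \left(M_i^{(s)} + diag(\idown, c_{(i)})\right);
$$
and Theorem~\ref{JMequationssk}, which is precisely the identity that trades these $M$-terms for the desired $J$-terms up to the correction $n(\mu) - |\text{Int}^k(\lambda)|$.

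The execution is then a direct chain of substitutions. First I would substitute the expression from Corollary~\ref{sscochargeLM} into the right-hand side of Theorem~\ref{sschcchrelL} to obtain
$$
\kcharge(T) = n(\mu) - |\text{Int}^k(\lambda)| - \sum_{s \in S}\sum_{i=1}^{\ell(s)} \left(M_i^{(s)} + diag(\idown, c_{(i)})\right).
$$
Next I would apply Theorem~\ref{JMequationssk}, which asserts that this right-hand side is equal to
$$
\sum_{s \in S}\sum_{i=1}^{\ell(s)} \left(J_i^{(s)} + diag(\iup, c^{(i)})\right),
$$
which is exactly the claimed formula.

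Non-negativity is then immediate and is the whole point of the reformulation: each $J_i^{(s)}$ is defined by a recursion that only ever stays fixed or increases by $1$ starting from $J_1^{(s)} = 0$, so $J_i^{(s)} \geq 0$ term-wise; meanwhile $diag(\iup, c^{(i)}) \geq 0$ by its definition as a count of diagonals. Hence each summand is a non-negative integer, proving the non-negativity statement.

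Since every ingredient is already proved earlier in the paper, there is no genuine obstacle in the argument itself; the corollary is essentially a bookkeeping step. The only subtlety worth double-checking is that the $c^{(i)}$ and $c_{(i)}$ appearing in the statements of Corollary~\ref{sscochargeLM} and Theorem~\ref{JMequationssk} refer consistently to the addable cells of $T_{\leq i_r}$ (and analogously for $\iup, \idown$ within each standard sequence $s$), so that the substitution into Theorem~\ref{sschcchrelL} matches term by term. Once that is verified, the proof is a one-line composition of the three cited results.
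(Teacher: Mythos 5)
Your proposal is correct and matches the paper's own argument exactly: the corollary is obtained by substituting Corollary~\ref{sscochargeLM} into Theorem~\ref{sschcchrelL} and then applying Theorem~\ref{JMequationssk}, with non-negativity immediate from the definitions of $J_i^{(s)}$ and $diag$. The consistency check on $c^{(i)}$, $c_{(i)}$, $\iup$, $\idown$ that you flag is the right thing to verify and poses no difficulty.
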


\begin{example}
For $k=4$, recall the semi-standard $\ktableau$ from Example \ref{stdseqchoiceex} is
$$
T = {\text{\footnotesize{\tableau[scY]{\tf 7_0 \cr \tf 6_1 \cr \tf 5_2 & 6_3 \cr \tf 3_3 & 4_4 & \tf 7_0 \cr \tf 2_4 & 3_0 & 5_1 & \tf 5_2 & 6_3 \cr 1_0 & \tf 1_1 & 2_2 & \tf 3_3 & 4_4 & \tf 4_0 & 5_1 & \tf 5_2 & 6_3}}}} 
$$
where the bold cells show the first standard sequence of cells.  Table \ref{tab:semistdchargeextable} gives $\kcharge(T) = 12.$% and it agrees with Corollary \ref{mainresult}.
\end{example}

\begin{table}[h]
\renewcommand{\arraystretch}{1.4}
\begin{center}
\footnotesize{
\begin{tabular}{| c | c | c | c | c | c |}
\hline
$i_r$ & diag$({i}^{\uparrow}, {(i-1)}^{\uparrow})$ & $I_i$ & High $T_{\leq i_r}$-residue order & $J_i$ & diag$({i}^{\uparrow},c^{(i)})$ \\
\hline 
$1_1$ & - & $0$ & - & $0$ & 0 \\
\hline
$2_4$ & 0 & $0 - 0 = 0$ & $3 > 2 > 1 > 0 > 4$ & $0$ & $0$  \\
\hline
$3_3$ & 0 & $0 - 0 = 0$ & $2 > 1 > 0 > 4 > 3$ & $0$ & $0$ \\
\hline
$4_0$ & 1 & $0 + 1 + 1 = 2$ & $2 > 1 > 0 > 4 > 3$ & $1$ & $1$ \\
\hline
$5_2$ & 0 & $2 - 1 = 1$ & $1 > 0 > 4 > 3 > 2$ & $1$ & $0$ \\
\hline
$6_1$ & 0 & $1 - 0 = 1$ & $0 > 4 > 3 > 2 > 1$ & $1$ & $0$ \\
\hline
$7_0$ & 0 & $1 - 0 = 1$ & $4 > 3 > 2 > 1 > 0$ & $1$ & $0$ \\
\hline\hline
$1_0$ & - & $0$ & - & $0$ & 0 \\
\hline
$2_2$ & 0 & $0 + 1 + 0 = 1$ & $4 > 3 > 2 > 1 > 0$ & $1$ & $0$ \\
\hline
$3_0$ & 0 & $1 - 0 = 1$ & $3 > 2 > 1 > 0 > 4$ & $1$ & $0$ \\
\hline
$4_4$ & 0 & $1 - 0 = 1$ & $2 > 1 > 0 > 4 > 3$ & $1$ & $0$ \\
\hline
$5_1$ & 0 & $1 + 1 + 0 = 2$ & $2 > 1 > 0 > 4 > 3$ & $2$ & $0$ \\
\hline
$6_3$ & 0 & $2 - 0 = 2$ & $1 > 0 > 4 > 3 > 2$ & $2$ & $0$ \\
\hline
\end{tabular}
}
\end{center}
%\vspace{0.1in}
\caption{$\kcharge$ of $T$ from Example \ref{stdseqchoiceex}} 
\label{tab:semistdchargeextable}
\end{table}

%------------------------------------------------------------------------------------------------
\section{Acknowledgements}
%------------------------------------------------------------------------------------------------
\label{sec:ack}
\vspace{-0.05in}
The author would like to thank L. Lapointe, J. Morse, A. Schilling and M. Zabrocki for inspirational conversations through out the research and writing of this paper.
\vspace{-0.1in}

\bibliographystyle{alpha}
\bibliography{avi}
\label{sec:biblio}
\Addresses
\end{document}